\theoremstyle{definition}
\newtheorem{thm}{Theorem}[section]
\newtheorem{Def}[thm]{Definition}
\newtheorem{pro}[thm]{Proposition}
\newtheorem{cor}[thm]{Corollary}
\newtheorem{lem}[thm]{Lemma}
\newtheorem*{mainthm}{Theorem A}
\newtheorem*{mainthm2}{Theorem B}
\theoremstyle{definition}
\begin{document}
\title[Strongly outer actions on the Razak-Jacelon algebra]{
Strongly outer actions of certain torsion-free amenable groups on the Razak-Jacelon algebra}
\author{Norio Nawata}
\address{Department of Pure and Applied Mathematics, Graduate School of Information Science 
and Technology, Osaka University, Yamadaoka 1-5, Suita, Osaka 565-0871, Japan}
\email{nawata@ist.osaka-u.ac.jp}
\keywords{Razak-Jacelon algebra; Kirchberg's central sequence C$^*$-algebra; Rohlin type theorem; 
First cohomology vanishing type theorem, Torsion-free amenable groups.}
\subjclass[2020]{Primary 46L55, Secondary 46L35; 46L40}
\thanks{This work was supported by JSPS KAKENHI Grant Number 20K03630}

\begin{abstract}
Let $\mathfrak{C}$ be the smallest class of countable discrete groups with the following properties:
(i) $\mathfrak{C}$ contains the trivial group, (ii) $\mathfrak{C}$ is closed under isomorphisms, countable 
increasing unions and extensions by $\mathbb{Z}$. 
Note that $\mathfrak{C}$ contains all countable discrete torsion-free abelian groups and 
poly-$\mathbb{Z}$ groups. Also, $\mathfrak{C}$ is a subclass of the class of 
countable discrete torsion-free elementary amenable groups. 
In this paper, we show that if $\Gamma\in \mathfrak{C}$, 
then all strongly outer actions of 
$\Gamma$ on the Razak-Jacelon algebra $\mathcal{W}$ are cocycle conjugate to each other. 
This can be regarded as an analogous result of Szab\'o's result for strongly self-absorbing 
C$^*$-algebras. 
\end{abstract}
\maketitle
\section{Introduction}

Let $\mathcal{W}$ be the Razak-Jacelon algebra studied in \cite{J} (see also \cite{Raz}). 
By classification results in \cite{CE} and \cite{EGLN} (see also \cite{Na4}), 
$\mathcal{W}$ is the unique simple separable nuclear 
monotracial $\mathcal{Z}$-stable C$^*$-algebra that is $KK$-equivalent to $\{0\}$. 
Also, $\mathcal{W}$ is regarded as a stably finite analog of the Cuntz algebra $\mathcal{O}_2$.
More generally, we can consider that $\mathcal{W}$ is a non-unital analog of strongly self-absorbing 
C$^*$-algebras. (Note that every strongly self-absorbing C$^*$-algebra is unital by definition.) 
In this paper, we study group actions on $\mathcal{W}$ and show an analogous result of 
 Szab\'o's result in \cite{Sza7} for group actions on strongly self-absorbing 
C$^*$-algebras (see also \cite{IM1}, \cite{IM2}, \cite{IM3}, \cite{M1}, \cite{M2}, 
\cite{MS1}, \cite{MS2} \cite{Sza3} and \cite{Sza4} for pioneering works).  
We refer the reader to \cite{I} for the importance and some difficulties of studying group actions 
on C$^*$-algebras.  
Gabe and Szab\'o classified outer actions of countable discrete amenable groups on 
Kirchberg algebras up to cocycle conjugacy in \cite{GS}. 
In their classification, $\mathcal{O}_2$ and $\mathcal{O}_{\infty}$ play central roles. 
Hence it is natural to expect that $\mathcal{W}$ plays a central role in the classification theory of 
group actions on ``classifiable'' stably finite (at least stably projectionless) C$^*$-algebras.  

Let $\mathfrak{C}$ be the smallest class of countable discrete groups with the following properties:
(i) $\mathfrak{C}$ contains the trivial group, (ii) $\mathfrak{C}$ is closed under isomorphisms, countable 
increasing unions and extensions by $\mathbb{Z}$. (We say that $\Gamma$ is an extension by 
$\mathbb{Z}$ if there exists an exact sequence $1\to H \to \Gamma \to \mathbb{Z}\to 1$.) 
Note that $\mathfrak{C}$ is the same class as in \cite[Definition B]{Sza7}. 
It is easy to see that $\mathfrak{C}$ contains all countable discrete torsion-free abelian groups and 
poly-$\mathbb{Z}$ groups, and $\mathfrak{C}$ is a subclass of the class of countable discrete 
torsion-free elementary amenable groups. 
Szab\'o showed that if $\Gamma\in \mathfrak{C}$ and $\mathcal{D}$ is a strongly self-absorbing 
C$^*$-algebra, then there exists a unique strongly outer action of $\Gamma$ on $\mathcal{D}$ up to 
cocycle conjugacy (\cite[Corollary 3.4]{Sza7}). In this paper, we show an analogous result of this result. 
Indeed, the main theorem in this paper is the following theorem. 

\begin{mainthm}
(Theorem \ref{thm:main}.) \ \\
Let $\Gamma$ be a countable discrete group in $\mathfrak{C}$, and let $\alpha$ be a strongly outer 
action of $\Gamma$ on $\mathcal{W}$. Then $\alpha$ is cocycle conjugate to 
$\mu^{\Gamma}\otimes \mathrm{id}_{\mathcal{W}}$ on $M_{2^{\infty}}\otimes\mathcal{W}$ where 
$\mu^{\Gamma}$ is the Bernoulli shift action of 
$\Gamma$ on $\bigotimes_{g\in \Gamma}M_{2^{\infty}}\cong M_{2^{\infty}}$.
\end{mainthm}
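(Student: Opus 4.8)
The plan is to follow the model-absorption strategy behind Szab\'o's theorem, transported to the non-unital central-sequence framework appropriate to $\mathcal{W}$. Write $\delta:=\mu^{\Gamma}$ for the Bernoulli shift on $M_{2^{\infty}}$ and $\gamma:=\delta\otimes\mathrm{id}_{\mathcal{W}}$ for the model action. Two facts require no induction: first, $\delta$ is a strongly self-absorbing $\Gamma$-action, since $\mu^{\Gamma}\otimes\mu^{\Gamma}$ is again a Bernoulli shift with fibre $M_{2^{\infty}}\otimes M_{2^{\infty}}\cong M_{2^{\infty}}$ and the absorbing first-factor embedding is equivariantly approximately inner; second, $M_{2^{\infty}}\otimes\mathcal{W}\cong\mathcal{W}$ by the uniqueness characterization of $\mathcal{W}$ recalled in the introduction, so $\gamma$ is a genuine action on $\mathcal{W}$. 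The real content is the equivariant McDuff-type \emph{absorption property}: every strongly outer $\Gamma$-action $\alpha$ on $\mathcal{W}$ is cocycle conjugate to $\alpha\otimes\delta$. Granting this for $\alpha$ (and noting that it holds for the model $\gamma$ by self-absorption of $\delta$), I would finish with a two-sided equivariant intertwining in the spirit of Elliott and Evans--Kishimoto, using the $\delta$-absorption of both actions together with $\mathcal{W}\cong\mathcal{W}\otimes\mathcal{W}$, which matches $\alpha$ with $\gamma$; since $\gamma$ does not depend on $\alpha$, all strongly outer actions become mutually cocycle conjugate.

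The absorption property is what admits a clean induction along the construction of $\mathfrak{C}$, via the equivalent criterion that $\alpha\cong\alpha\otimes\delta$ holds if and only if there is a unital equivariant $\ast$-homomorphism from $(M_{2^{\infty}},\delta)$ into the fixed-point algebra of the equivariant Kirchberg central-sequence algebra $F_{\alpha}(\mathcal{W})$. For the trivial group the criterion reduces to $\mathcal{W}\cong\mathcal{W}\otimes M_{2^{\infty}}$, which is immediate, and closure under isomorphisms is automatic. For a countable increasing union $\Gamma=\bigcup_{n}\Gamma_{n}$ with each $\Gamma_{n}\in\mathfrak{C}$, every element of $\Gamma$ already lies in some $\Gamma_{n}$, so the embeddings supplied by the inductive hypothesis for the restrictions $\alpha|_{\Gamma_{n}}$ can be spliced together by a reindexing argument in the sequence algebra, the compatibility defects being absorbed by a further intertwining.

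The heart of the proof is the extension step. Suppose $1\to\Gamma_{0}\to\Gamma\to\mathbb{Z}\to1$ with $\Gamma_{0}\in\mathfrak{C}$ and the absorption property known for $\Gamma_{0}$. Given a strongly outer $\Gamma$-action $\alpha$, the restriction $\alpha|_{\Gamma_{0}}$ is again strongly outer, so by the inductive hypothesis its equivariant central-sequence algebra carries a unital copy of $(M_{2^{\infty}},\mu^{\Gamma_{0}})$; the residual automorphism induced by a lift of a generator of $\mathbb{Z}$ then acts on the fixed-point algebra $F_{\alpha|_{\Gamma_{0}}}(\mathcal{W})^{\mu^{\Gamma_{0}}}$. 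To build from this the required $\Gamma$-equivariant embedding I would invoke a \emph{Rohlin type theorem}, producing approximately central Rohlin towers for this residual automorphism inside the fixed-point algebra, and a \emph{first cohomology vanishing type theorem}, which straightens the cocycle relating the $\Gamma_{0}$-embedding to its image under the generator. Combining the Rohlin towers with the vanishing of the cohomology upgrades the $\Gamma_{0}$-equivariant structure to a $\Gamma$-equivariant one, completing the induction.

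I expect this extension step to be the main obstacle, and within it the delicate point is to verify that the fixed-point algebra $F_{\alpha|_{\Gamma_{0}}}(\mathcal{W})^{\mu^{\Gamma_{0}}}$ retains enough regularity---a well-behaved trace and sufficiently large order structure, together with unital absorption of $\mathcal{W}$---for both the Rohlin tower construction and the cohomology vanishing to run. Strong outerness of $\alpha$ is precisely what supplies the freeness of the residual automorphism needed for the Rohlin theorem, while the $\mathcal{Z}$-stability and $KK$-triviality of $\mathcal{W}$ provide the room inside the central-sequence algebra; reconciling these two requirements inside the fixed-point algebra, uniformly along the induction, is the crux of the argument.
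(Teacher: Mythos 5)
Your overall skeleton (induction along the construction of $\mathfrak{C}$, with the $\mathbb{Z}$-extension step handled by a Rohlin type theorem plus a first cohomology vanishing theorem) does match the paper, but the proposal has a genuine gap at the final, classification-theoretic step. Mutual absorption of the Bernoulli action $\delta=\mu^{\Gamma}$ by $\alpha$ and by the model $\gamma=\delta\otimes\mathrm{id}_{\mathcal{W}}$ does \emph{not} yield $\alpha\simeq\gamma$ by a two-sided intertwining: the Elliott/Evans--Kishimoto scheme you invoke needs absorption in \emph{both} directions, i.e.\ not only $\alpha\simeq\alpha\otimes\gamma$ but also $\gamma\simeq\gamma\otimes\alpha$, and the latter is an equivariant Kirchberg--Phillips ($\mathcal{O}_2$-style) absorption statement that is the real content of the uniqueness. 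In the $\mathcal{W}$ setting this is exactly what Theorem \ref{thm:8.1} (from \cite{Na5}) supplies, and its hypotheses include the existence of an injective homomorphism $\mathcal{W}\rtimes_{\alpha}\Gamma\to\mathcal{W}$ --- a condition your proposal never addresses. The paper verifies it by observing that $\alpha$ is cocycle conjugate to $\alpha\otimes\mathrm{id}_{M_{2^{\infty}}}$ (so the crossed product is $M_{2^{\infty}}$-stable), that $KK$-contractibility is preserved under crossed products by $\mathbb{Z}$ and countable inductive limits, and then appealing to the characterization of $\mathcal{W}$ in \cite[Theorem 6.1]{Na4} to conclude $\mathcal{W}\rtimes_{\alpha}\Gamma\cong\mathcal{W}$. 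Without this input (or an equivalent one) your argument stops at ``$\alpha$ absorbs the model'' and cannot conclude.

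A second, related problem is the choice of condition carried through the induction. You induct on the existence of a unital equivariant copy of $(M_{2^{\infty}},\delta)$ in the equivariant central sequence algebra. The paper instead inducts on ``property W'' (Definition \ref{def:property-w}), which consists of a unital copy of $M_{2}(\mathbb{C})$ in $F(\mathcal{W})^{\alpha}$ \emph{together with} a uniqueness clause: normal elements of $F(\mathcal{W})^{\alpha}$ with the same spectrum and the same (faithful) tracial spectral data are unitarily equivalent. This uniqueness clause is not a technical nicety: it is what makes the homotopy statements of Theorem \ref{thm:homotopy} available (existence and equivalence of projections of prescribed trace, Lipschitz paths of unitaries), and these in turn are the regularity inputs for both the Rohlin tower construction (Theorem \ref{thm:Rohlin}) and the cohomology vanishing (Corollary \ref{cor:vanishing}) at the \emph{next} stage of the induction, as well as a hypothesis of Theorem \ref{thm:8.1} itself. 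Your closing paragraph correctly identifies that the fixed point algebra must ``retain enough regularity,'' but the proposal gives no mechanism for propagating that regularity; an equivariant copy of $M_{2^{\infty}}$ alone is not known to imply the uniqueness clause. So the induction as you set it up is not self-sustaining: you would need to strengthen the inductive hypothesis to something like property W (and then prove Lemma \ref{lem:extension-z}, whose part (ii) is precisely the cocycle-straightening of the unitary implementing $uxu^{*}=y$ via Corollary \ref{cor:vanishing}) before the argument closes.
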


We say that an action $\alpha$ on $\mathcal{W}$ is \textit{$\mathcal{W}$-absorbing} if 
there exists a simple separable nuclear monotracial C$^*$-algebra $A$ and an action  $\beta$ 
on $A$ such that $\alpha$ is cocycle conjugate to $\beta\otimes\mathrm{id}_{\mathcal{W}}$ 
on $A\otimes\mathcal{W}$. 
The proof of the main theorem above is based on a characterization in \cite{Na5} of strongly outer 
$\mathcal{W}$-absorbing actions of countable discrete amenable groups. 
Actually, we use the following theorem that is a slight variant of \cite[Theorem 8.1]{Na5}. 
Note that $F(\mathcal{W})$ is Kirchberg's central sequence C$^*$-algebra of 
$\mathcal{W}$. Furthermore, $F(\mathcal{W})^{\alpha}$ is the fixed point algebra for the action on 
$F(\mathcal{W})$ induced by an action $\alpha$ on $\mathcal{W}$. 
Let $\mathrm{Sp}(x)$ denote the spectrum of $x$. 

\begin{mainthm2}
(Theorem \ref{thm:8.1}.) \ \\
Let $\alpha$ be a strongly outer action of a countable discrete amenable group $\Gamma$ on 
$\mathcal{W}$. Then $\alpha$ is cocycle conjugate to 
$\mu^{\Gamma}\otimes\mathrm{id}_{\mathcal{W}}$ on $M_{2^{\infty}}\otimes \mathcal{W}$
if and only if $\alpha$ satisfies the following properties: \ \\
(i) there exists a unital  $*$-homomorphism from $M_{2}(\mathbb{C})$ to 
$F(\mathcal{W})^{\alpha}$, \ \\
(ii) if $x$ and $y$ are normal elements in $F(\mathcal{W})^{\alpha}$ such that 
$\mathrm{Sp}(x)=\mathrm{Sp}(y)$ and 
$0<\tau_{\mathcal{W}, \omega} (f(x))=\tau_{\mathcal{W}, \omega}(f(y))$ 
for any $f\in C(\mathrm{Sp}(x))_{+}\setminus\{0\}$, then $x$ and $y$ are unitary equivalent 
in $F(\mathcal{W})^{\alpha}$, \ \\
(iii) there exists an injective $*$-homomorphism from 
$\mathcal{W}\rtimes_{\alpha}\Gamma$ to $\mathcal{W}$.  
\end{mainthm2}

We use a first cohomology vanishing type theorem (Corollary \ref{cor:vanishing}) for showing 
that if $\Gamma\in \mathfrak{C}$ and $\alpha$ is a strongly outer action of $\Gamma$ on 
$\mathcal{W}$, then $F(\mathcal{W})^{\alpha}$ satisfies the properties (i) and (ii) 
in the theorem above. 
Kishimoto's techniques for Rohlin type theorems in \cite{Kis0} and \cite{Kis00}, 
Herman-Ocneanu's argument in \cite{HO} and homotopy type arguments in \cite{Na2} 
enable us to show this first cohomology vanishing type theorem. 
Also, note that our arguments for $F(\mathcal{W})^{\alpha}$ are based on results that are shown 
by techniques around (equivariant) property (SI) in \cite{MS}, \cite{MS2}, \cite{MS3}, 
\cite{Sa0}, \cite{Sa}, \cite{Sa2} and \cite{Sza6}.

\section{Preliminaries}\label{sec:pre}

\subsection{Notations and basic definitions}

Let $\alpha$ and $\beta$ be actions of a countable discrete group $\Gamma$ 
on C$^*$-algebras $A$ and $B$, respectively. 
We say that $\alpha$ is \textit{conjugate to} $\beta$ if there exists a isomorphism
$\varphi$ from $A$ onto $B$ such that $\varphi\circ \alpha_g=\beta_g\circ \varphi$ for any 
$g\in \Gamma$. 
Note that $\alpha$ induces an action on the multiplier algebra $M(A)$ of $A$. We denote it by the 
same symbol $\alpha$. 
An $\alpha$-cocycle on $A$ is a map from $\Gamma$ to the unitary group of $M(A)$ such that 
$u_{gh}=u_{g}\alpha_g(u_h)$ for any $g,h\in \Gamma$. 
We say that $\alpha$ is \textit{cocycle conjugate to} $\beta$ if there exist an 
isomorphism $\varphi$ from $A$ onto $B$ and a $\beta$-cocycle $u$ such that 
$\varphi\circ \alpha_g=\mathrm{Ad}(u_g)\circ \beta_g \circ \varphi$ for any $g\in\Gamma$. 
An action $\alpha$ of $\Gamma$ on $A$ is said to be \textit{outer} if $\alpha_g$ is not an 
inner automorphism of $A$ for any $g\in \Gamma\setminus \{\iota\}$ 
where $\iota$ is the identity of $\Gamma$. 
We denote by $A^{\alpha}$ and $A\rtimes_{\alpha}\Gamma$ the fixed point algebra 
and the reduced crossed product C$^*$-algebra, respectively.  

Assume that $A$ has a unique tracial state $\tau_{A}$. 
Let $(\pi_{\tau_{A}}, H_{\tau_{A}})$ be the Gelfand-Naimark-Segal (GNS) representation of $\tau_{A}$. 
Then $\pi_{\tau_{A}}(A)^{''}$ is a finite factor and $\alpha$ induces an action $\tilde{\alpha}$ on 
$\pi_{\tau_{A}}(A)^{''}$. We say that $\alpha$ is \textit{strongly outer} if 
$\tilde{\alpha}$ is an outer action on $\pi_{\tau_{A}}(A)^{''}$. 
(We refer the reader to \cite{GHV} and \cite{MS2} for the definition of strongly outerness 
for more general settings.)

We denote by $\mathcal{R}_0$ and $M_{2^{\infty}}$ the injective II$_1$ factor and the CAR 
algebra, respectively. 

\subsection{Fixed point algebras of Kirchberg's central sequence C$^*$-algebras}\label{sec:fixed}

Let $\omega$ be a free ultrafilter on $\mathbb{N}$, and put 
$$
A^{\omega}:= \ell^{\infty}(\mathbb{N}, A)/\{\{x_n\}_{n\in\mathbb{N}}\in \ell^{\infty}(\mathbb{N}, A)\; |
\; \lim_{n\to\omega} \|x_n \|=0 \}.
$$
We denote by $(x_n)_n$ a representative of an element in $A^{\omega}$. 
We identify $A$ with the C$^*$-subalgebra of $A^{\omega}$ consisting of equivalence classes of 
constant sequences. 
Set
$$
\mathrm{Ann}(A, A^{\omega}):= \{(x_n)_n\in A^{\omega}\cap A^{\prime}\; |\; \lim_{n\to \omega}\| x_na\|=0 
\text{ for any } a\in A\}.
$$
Then $\mathrm{Ann}(A, A^{\omega})$ is an closed ideal of $A^{\omega}\cap A^{\prime}$, and define 
$$
F(A):= A^{\omega}\cap A^{\prime}/\mathrm{Ann}(A, A^{\omega}).
$$
See \cite{Kir2} for basic properties of $F(A)$. For a finite von Neumann algebra $M$, put
$$
M^{\omega}:=\ell^{\infty}(\mathbb{N}, M)/\{\{x_n\}_{n\in\mathbb{N}}\in \ell^{\infty}(\mathbb{N}, M)
\; |\; \lim_{n\to\omega}\| x_n\|_2=0\}
$$
and
$$ 
M_{\omega}:=M^{\omega}\cap M^{\prime}. 
$$
Note that we identify $M$ with the subalgebra of $M^{\omega}$ consisting of equivalence classes of 
constant sequences and $M_{\omega}$ is the von Neumann algebraic central sequence algebra 
(or the asymptotic centralizer) of $M$.  

For a tracial state $\tau_A$ on $A$, define a map $\tau_{A, \omega}$ from $F(A)$ to $\mathbb{C}$ by 
$\tau_{A, \omega}([(x_n)_n])=\lim_{n\to\omega}\tau_A(x_n)$ for any $[(x_n)_n]\in  F(A)$. 
Then $\tau_{A, \omega}$ is a well defined tracial state on $F(A)$ by \cite[Proposition 2.1]{Na2}.
Put $J_{\tau_{A}}:=\{x\in F(A)\; |\; \tau_{A, \omega}(x^*x)=0\}$. 
If $A$ is separable and $\tau_{A}$ is faithful, then $\pi_{\tau_{A}}$ induces an isomorphism 
from $F(A)/J_{\tau_{A}}$ onto $\pi_{\tau_{A}}(A)^{''}_{\omega}$ by essentially the same argument as in 
the proof of \cite[Theorem 3.3]{KR}. 
In this paper, the reindexing argument and the diagonal argument (or Kirchberg's $\varepsilon$-test 
\cite[Lemma A.1]{Kir2}) are frequently used. 
We refer the reader to \cite[Section 1.3]{BBSTWW} and \cite[Chapter 5]{Oc} for details 
of these arguments. 
Every action $\alpha$ of a countable discrete group on $A$ induces an action on $F(A)$. 
We denote it by the same symbol $\alpha$ for simplicity. 
Note that if $\alpha$ on $A$ are cocycle conjugate to $\beta$ on $B$, then $\alpha$ on $F(A)$ 
are conjugate to $\beta$ on $F(B)$. 
If $A$ is simple, separable and monotracial, then $\tilde{\alpha}$ also induces an action on 
$\pi_{\tau_{A}}(A)^{''}_{\omega}$. We also denote it by the same symbol $\tilde{\alpha}$. 
By \cite[Proposition 3.9]{Na5}, we see that $\pi_{\tau_{A}}$ induces an 
isomorphism from $F(A)^{\alpha}/J_{\tau_{A}}^{\alpha}$ onto 
$(\pi_{\tau_{A}}(A)^{''})_{\omega}^{\tilde{\alpha}}$. 

The following proposition is an immediate consequence of \cite[Theorem 3.6]{Na5}, 
\cite[Proposition 3.11]{Na5} and \cite[Proposition 3.12]{Na5}. 
Note that these propositions are based on results in \cite{MS}, \cite{MS2}, \cite{MS3}, 
\cite{Sa0}, \cite{Sa}, \cite{Sa2} and \cite{Sza6}. 

\begin{pro}\label{pro:MS-pro}
Let $\alpha$ be an outer action of a countable discrete amenable group on $\mathcal{W}$. 
\ \\
(1) The Razak-Jacelon algebra $\mathcal{W}$ has property (SI) relative to $\alpha$, 
that is, if $a$ and $b$ are positive contractions in $F(\mathcal{W})^{\alpha}$ 
satisfying $\tau_{\mathcal{W}, \omega}(a)=0$ and 
$\inf_{m\in\mathbb{N}}\tau_{\mathcal{W}, \omega}(b^m)>0$, then there exists an element $s$ in 
$F(\mathcal{W})^{\alpha}$ such that $bs=s$ and $s^*s=a$. 
\ \\
(2) The fixed point algebra $F(\mathcal{W})^{\alpha}$ is monotracial.  
\ \\
(3) If $a$ and $b$ are 
positive elements in $F(\mathcal{W})^{\alpha}$ 
satisfying $d_{\tau_{\mathcal{W}, \omega}}(a)< d_{\tau_{\mathcal{W}, \omega}}(b)$, then 
there exists an element $r$ in $F(\mathcal{W})^{\alpha}$ such that $r^*br=a$. 
\end{pro}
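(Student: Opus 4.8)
The plan is to read the three assertions as, respectively, the contents of \cite[Theorem 3.6]{Na5}, \cite[Proposition 3.11]{Na5} and \cite[Proposition 3.12]{Na5}, and in each case to check that the hypotheses of those results are met by an outer action $\alpha$ of a countable discrete amenable group on $\mathcal{W}$. The common structural input is the isomorphism $F(\mathcal{W})^{\alpha}/J_{\tau_{\mathcal{W}}}^{\alpha}\cong (\pi_{\tau_{\mathcal{W}}}(\mathcal{W})^{\prime\prime})_{\omega}^{\tilde{\alpha}}$ recorded above, together with the fact that $\pi_{\tau_{\mathcal{W}}}(\mathcal{W})^{\prime\prime}$ is the injective $\mathrm{II}_1$ factor $\mathcal{R}_0$ (it is a finite factor by monotraciality, infinite-dimensional, and injective by nuclearity). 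Thus the analysis of $F(\mathcal{W})^{\alpha}$ is governed, modulo the trace-kernel ideal $J_{\tau_{\mathcal{W}}}^{\alpha}$, by the asymptotic centralizer of $\mathcal{R}_0$ equipped with the induced amenable action $\tilde{\alpha}$.

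For (1) I would simply invoke \cite[Theorem 3.6]{Na5}: the assertion that $\mathcal{W}$ has property (SI) relative to $\alpha$ is verbatim the conclusion there, the only point being to match the data, namely positive contractions $a,b\in F(\mathcal{W})^{\alpha}$ with $\tau_{\mathcal{W},\omega}(a)=0$ and $\inf_{m}\tau_{\mathcal{W},\omega}(b^{m})>0$, producing $s\in F(\mathcal{W})^{\alpha}$ with $bs=s$ and $s^{*}s=a$. This is where the real work sits: the proof in \cite{Na5} rests on the equivariant Rohlin-type and (equivariant) property (SI) techniques of \cite{MS2}, \cite{Sa} and \cite{Sza6} together with the related references, which are precisely what force the intertwining element $s$ to be $\alpha$-fixed rather than merely to lie in $F(\mathcal{W})$.

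For (2) and (3) I would then appeal to \cite[Proposition 3.11]{Na5} and \cite[Proposition 3.12]{Na5}. Monotraciality of $F(\mathcal{W})^{\alpha}$ in (2) is the conclusion of \cite[Proposition 3.11]{Na5}; through the displayed isomorphism it reflects the factoriality of the asymptotic centralizer fixed-point algebra $(\pi_{\tau_{\mathcal{W}}}(\mathcal{W})^{\prime\prime})_{\omega}^{\tilde{\alpha}}$ established there, with property (SI) relative to $\alpha$ from (1) ensuring that a trace on $F(\mathcal{W})^{\alpha}$ is already determined modulo $J_{\tau_{\mathcal{W}}}^{\alpha}$. Granting monotraciality, the comparison in (3) is the expected strict-comparison statement inside $F(\mathcal{W})^{\alpha}$: from $d_{\tau_{\mathcal{W},\omega}}(a)<d_{\tau_{\mathcal{W},\omega}}(b)$ one manufactures $r\in F(\mathcal{W})^{\alpha}$ with $r^{*}br=a$ by combining the unique trace with property (SI) relative to $\alpha$, exactly as in \cite[Proposition 3.12]{Na5}. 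The genuine obstacle is thus concentrated entirely in (1); once equivariant property (SI) is available, parts (2) and (3) follow by the now-routine trace-uniqueness and comparison machinery, so the proposition is immediate from the three cited results.
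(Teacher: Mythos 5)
Your proposal matches the paper exactly: the paper likewise derives all three parts directly from \cite[Theorem 3.6]{Na5}, \cite[Proposition 3.11]{Na5} and \cite[Proposition 3.12]{Na5}, noting that those results in turn rest on the (equivariant) property (SI) techniques of \cite{MS}, \cite{MS2}, \cite{MS3}, \cite{Sa0}, \cite{Sa}, \cite{Sa2} and \cite{Sza6}. Your additional remarks on the isomorphism $F(\mathcal{W})^{\alpha}/J_{\tau_{\mathcal{W}}}^{\alpha}\cong(\pi_{\tau_{\mathcal{W}}}(\mathcal{W})'')_{\omega}^{\tilde{\alpha}}$ and on where the substantive work lies are consistent with the paper's framing and do not change the route.
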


\begin{Def}\label{def:property-w}
Let $\alpha$ be an action of a countable discrete group $\Gamma$ on $\mathcal{W}$. We say that 
$\alpha$ has \textit{property W} if  $\alpha$ satisfies the following properties: \ \\
(i) there exists a unital  $*$-homomorphism from $M_{2}(\mathbb{C})$ to 
$F(\mathcal{W})^{\alpha}$, \ \\
(ii) if $x$ and $y$ are normal elements in $F(\mathcal{W})^{\alpha}$ such that 
$\mathrm{Sp}(x)=\mathrm{Sp}(y)$ and 
$0<\tau_{\mathcal{W}, \omega} (f(x))=\tau_{\mathcal{W}, \omega}(f(y))$ 
for any $f\in C(\mathrm{Sp}(x))_{+}\setminus\{0\}$, then $x$ and $y$ are unitary equivalent 
in $F(\mathcal{W})^{\alpha}$. 
\end{Def}

Note that if there exists a unital $*$-homomorphism from $M_{2}(\mathbb{C})$ to 
$F(\mathcal{W})^{\alpha}$, then $\alpha$ on $\mathcal{W}$ 
is cocycle conjugate to $\alpha\otimes\mathrm{id}_{M_{2^{\infty}}}$ on 
$\mathcal{W}\otimes M_{2^{\infty}}$. 
Indeed, there exists a unital $*$-homomorphism from $M_{2^{\infty}}$ to $F(\mathcal{W})^{\alpha}$ by 
a similar argument as \cite[Corollary 1.13]{Kir2}. 
Hence \cite[Corollary 3.8]{Sza1} (see also \cite{Sza1c}) implies this cocycle conjugacy.  
Using this observation, Proposition \ref{pro:MS-pro} and Definition \ref{def:property-w} 
instead of $M_{2^{\infty}}$-stability of $\mathcal{W}$, \cite[Proposition 4.1]{Na2}, \cite[Theorem 5.3]{Na2}
and \cite[Theorem 5.8]{Na2},
we obtain the following theorem by essentially the same arguments as in the proofs of 
\cite[Proposition 4.2]{Na2}, \cite[Theorem 5.7]{Na2} and 
\cite[Corollary 5.11]{Na2}(or \cite[Corollary 5.5]{Na3}). 

\begin{thm}\label{thm:homotopy}
Let $\alpha$ be an outer action of a countable discrete amenable group on $\mathcal{W}$. 
Assume that $\alpha$ has property W.   \ \\
(1) For any $\theta\in [0,1]$, there exists a projection $p$ in $F(\mathcal{W})^{\alpha}$ 
such that $\tau_{\mathcal{W}, \omega}(p)=\theta$. \ \\
(2) For any unitary element $u$ in $F(\mathcal{W})^{\alpha}$, there exists a continuous 
path of unitaries $U: [0,1]\to F(\mathcal{W})^{\alpha}$ such that 
$$
U(0)=1,\quad  U(1)=u \quad \text{and} \quad \mathrm{Lip}(U)\leq 2\pi 
$$ 
where $\mathrm{Lip}(U)$ is the Lipschitz constant of $U$, that is, the smallest positive number satisfying 
$
\| U(t)- U(s)\| \leq \mathrm{Lip}(U)|t-s|
$ 
for any $t, s\in [0,1]$. 
\ \\
(3) If $p$ and $q$ are projections in $F(\mathcal{W})^{\alpha}$ such that 
$0<\tau_{\mathcal{W}, \omega}(p)=\tau_{\mathcal{W}, \omega}(q)$, then $p$ and $q$ are 
Murray-von Neumann equivalent. 
\end{thm}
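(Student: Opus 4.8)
The plan is to run the equivariant versions of the arguments in \cite{Na2}, replacing each non-equivariant ingredient by its counterpart for the fixed point algebra. Concretely, property W\,(i) together with the remark following Definition \ref{def:property-w} gives a unital homomorphism from $M_{2^{\infty}}$ into $F(\mathcal{W})^{\alpha}$; Proposition \ref{pro:MS-pro}\,(2) says $F(\mathcal{W})^{\alpha}$ is monotracial; Proposition \ref{pro:MS-pro}\,(1) provides property (SI) relative to $\alpha$; Proposition \ref{pro:MS-pro}\,(3) provides comparison by the trace $\tau_{\mathcal{W},\omega}$; and property W\,(ii) supplies uniqueness of normal elements up to unitary equivalence. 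These are precisely the inputs used in \cite[Proposition 4.2]{Na2}, \cite[Theorem 5.7]{Na2} and \cite[Corollary 5.11]{Na2} in place of, respectively, $M_{2^{\infty}}$-stability of $\mathcal{W}$, \cite[Proposition 4.1]{Na2}, \cite[Theorem 5.3]{Na2} and \cite[Theorem 5.8]{Na2}, so I would carry those proofs over essentially verbatim.

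I would dispose of part (3) first, since it is almost immediate from property W\,(ii). If $p,q$ are projections with $0<\tau_{\mathcal{W},\omega}(p)=\tau_{\mathcal{W},\omega}(q)$ and neither is the unit, then both have spectrum $\{0,1\}$, and for every $f$ one has $\tau_{\mathcal{W},\omega}(f(p))=f(0)(1-\tau_{\mathcal{W},\omega}(p))+f(1)\tau_{\mathcal{W},\omega}(p)$, and likewise for $q$; hence the hypotheses of property W\,(ii) hold, so $p$ and $q$ are unitarily equivalent and in particular Murray--von Neumann equivalent (the boundary case $\tau_{\mathcal{W},\omega}(p)=1$ being treated as in \cite[Corollary 5.11]{Na2}). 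For part (1) I would first use the unital copy of $M_{2^{\infty}}$ to produce projections of every dyadic trace, and then realize an arbitrary $\theta\in[0,1]$ by lifting a projection of trace $\theta$ from the von Neumann quotient $(\pi_{\tau_{\mathcal{W}}}(\mathcal{W})^{''})_{\omega}^{\tilde{\alpha}}\cong F(\mathcal{W})^{\alpha}/J_{\tau_{\mathcal{W}}}^{\alpha}$ to a positive contraction $a$ with $\tau_{\mathcal{W},\omega}(a)=\theta$ and $a^{2}-a\in J_{\tau_{\mathcal{W}}}^{\alpha}$. Cutting down by spectral projections of $a$ and using property (SI) relative to $\alpha$ together with comparison (Proposition \ref{pro:MS-pro}) to absorb the trace-negligible defect then yields a genuine projection $p$ with $\tau_{\mathcal{W},\omega}(p)=\theta$, exactly as in \cite[Proposition 4.2]{Na2}.

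The substantive point is part (2), and the strategy is to show that every unitary $u\in F(\mathcal{W})^{\alpha}$ admits a self-adjoint logarithm $h=h^{*}$ with $u=e^{ih}$ and $\|h\|\leq 2\pi$, whence $U(t)=e^{ith}$ is a path with $U(0)=1$, $U(1)=u$ and $\mathrm{Lip}(U)=\|h\|\leq 2\pi$. When $\mathrm{Sp}(u)\neq\mathbb{T}$ one simply takes a continuous branch of the logarithm avoiding the missing point, so the whole difficulty is the case $\mathrm{Sp}(u)=\mathbb{T}$. Here I would pass to the von Neumann quotient, where Borel functional calculus gives a logarithm with spectrum in $[0,2\pi)$, lift it to a self-adjoint $\tilde{h}\in F(\mathcal{W})^{\alpha}$ with $0\leq\tilde{h}\leq 2\pi$, and then correct the trace-negligible discrepancy between $e^{i\tilde{h}}$ and $u$: the unitary $w=u^{*}e^{i\tilde{h}}$ lies in $1+J_{\tau_{\mathcal{W}}}^{\alpha}$, and property (SI) relative to $\alpha$ together with property W\,(ii) let one connect $w$ to $1$ by a path of arbitrarily small length, which is folded into the exponential path, following \cite[Theorem 5.7]{Na2}. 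I expect the main obstacle to be precisely this merging step: extracting the sharp constant $2\pi$ (rather than something like $3\pi$) requires organizing the correction so that it contributes no extra length, and it is here that the C$^{*}$-algebraic nature of $F(\mathcal{W})^{\alpha}$---as opposed to the von Neumann quotient, where a single Borel logarithm would finish immediately---forces one to use property (SI) and the uniqueness statement of property W\,(ii) in a quantitatively controlled way.
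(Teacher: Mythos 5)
Your proposal follows exactly the route the paper takes: the paper proves this theorem by citing the substitution of Proposition \ref{pro:MS-pro} and property W for the non-equivariant ingredients ($M_{2^{\infty}}$-stability, \cite[Proposition 4.1]{Na2}, \cite[Theorem 5.3]{Na2}, \cite[Theorem 5.8]{Na2}) and then running the arguments of \cite[Proposition 4.2]{Na2}, \cite[Theorem 5.7]{Na2} and \cite[Corollary 5.11]{Na2} essentially verbatim, which is precisely the substitution scheme you identify. Your sketches of the three parts are consistent with those sources, so this is the same proof, merely written out in more detail than the paper provides.
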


For any countable discrete group $\Gamma$, let $\mu^{\Gamma}$ be the Bernoulli shift action of 
$\Gamma$ on $\bigotimes_{g\in \Gamma}M_{2^{\infty}}\cong M_{2^{\infty}}$. 
The following theorem is a slight variant of \cite[Theorem 8.1]{Na5}. 
Note that one of the main techniques in the proof of \cite[Theorem 8.1]{Na5} is 
Szab\'o's approximate cocycle intertwining argument \cite{Sza} (see also \cite{Ell2}).  

\begin{thm}\label{thm:8.1}
Let $\alpha$ be a strongly outer action of a countable discrete amenable group $\Gamma$ on 
$\mathcal{W}$. Then $\alpha$ is cocycle conjugate to 
$\mu^{\Gamma}\otimes\mathrm{id}_{\mathcal{W}}$ on $M_{2^{\infty}}\otimes \mathcal{W}$ 
if and only if $\alpha$ has property W and there exists an injective $*$-homomorphism from 
$\mathcal{W}\rtimes_{\alpha}\Gamma$ to $\mathcal{W}$.  
\end{thm}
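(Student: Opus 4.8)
The plan is to reduce the statement to \cite[Theorem 8.1]{Na5}, from which it differs only in the form of the last hypothesis, and to treat the two implications separately. Both ``property W'' (being defined purely through $F(\mathcal{W})^{\alpha}$ with its induced action) and the isomorphism class of $\mathcal{W}\rtimes_{\alpha}\Gamma$ are cocycle-conjugacy invariants, so the ``only if'' direction reduces to checking that the model action $\mu^{\Gamma}\otimes\mathrm{id}_{\mathcal{W}}$ itself satisfies both conditions. For the model this is transparent: it has property W by \cite{Na5}, and its crossed product is $(M_{2^{\infty}}\rtimes_{\mu^{\Gamma}}\Gamma)\otimes\mathcal{W}$. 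Since $M_{2^{\infty}}\rtimes_{\mu^{\Gamma}}\Gamma$ is simple, separable, nuclear and monotracial (the Bernoulli shift is outer and preserves the unique trace, and $\Gamma$ is amenable), its tensor product with $\mathcal{W}$ is a simple separable nuclear monotracial $\mathcal{Z}$-stable C$^*$-algebra that is $KK$-equivalent to $\{0\}$, hence isomorphic to $\mathcal{W}$ by the classification theorem \cite{CE}, \cite{EGLN}, \cite{Na4}; in particular it embeds into $\mathcal{W}$.

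The substance lies in the ``if'' direction. First I would record that the hypotheses make $B:=\mathcal{W}\rtimes_{\alpha}\Gamma$ simple (outerness plus amenability of $\Gamma$), separable, nuclear and monotracial (strong outerness forces the trace on $\mathcal{W}$ to admit a unique invariant extension). Next, condition (i) of property W supplies a unital embedding $M_{2^{\infty}}\hookrightarrow F(\mathcal{W})^{\alpha}$, so by the remark following Definition~\ref{def:property-w} and \cite[Corollary 3.8]{Sza1} one has $\alpha\cong_{cc}\alpha\otimes\mathrm{id}_{M_{2^{\infty}}}$, whence $B\cong B\otimes M_{2^{\infty}}$ is $\mathcal{Z}$-stable. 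Combined with Proposition~\ref{pro:MS-pro} and Theorem~\ref{thm:homotopy}, this places $F(\mathcal{W})^{\alpha}$ in exactly the structural regime---property (SI) relative to $\alpha$, comparison of positive elements, projections realizing every trace value, and uniformly short unitary paths---on which the central-sequence intertwining underlying \cite{Na5} relies.

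It then remains to feed the injective homomorphism $\mathcal{W}\rtimes_{\alpha}\Gamma\hookrightarrow\mathcal{W}$ into \cite[Theorem 8.1]{Na5}. I would convert this ``static'' embedding into the dynamical input required there by a reindexing argument, promoting it to an embedding compatible with the fixed-point central sequence algebra $F(\mathcal{W})^{\alpha}$, and then combine it with equivariant property (SI) from Proposition~\ref{pro:MS-pro}(1) together with the comparison and unitary-path estimates of Theorem~\ref{thm:homotopy}. This identifies $\alpha$ as a strongly outer $\mathcal{W}$-absorbing action with property W, so that \cite[Theorem 8.1]{Na5} yields $\alpha\cong_{cc}\mu^{\Gamma}\otimes\mathrm{id}_{\mathcal{W}}$ on $M_{2^{\infty}}\otimes\mathcal{W}$.

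The hard part will be precisely this conversion. Property W controls only the trace-level internal structure of $F(\mathcal{W})^{\alpha}$ and is blind to the global $KK$-class of the crossed product, so the embedding carries the essential extra information needed to exclude the ``wrong'' isomorphism type of $B$; extracting it from a mere injective homomorphism, rather than an outright isomorphism $\mathcal{W}\rtimes_{\alpha}\Gamma\cong\mathcal{W}$, is the delicate point, and I expect it to rest on the uniqueness of embeddings into $\mathcal{W}$ and a two-sided approximate intertwining in the spirit of \cite{Na4}, carried out equivariantly. Should \cite[Theorem 8.1]{Na5} already be phrased in terms of such an embedding, this step collapses to a restatement and the genuine difficulty is inherited wholesale from the equivariant intertwining established there.
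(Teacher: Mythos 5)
Your proposal is correct and takes essentially the same route as the paper: everything is deduced from \cite[Theorem 8.1]{Na5}, with Theorem \ref{thm:homotopy} serving as the bridge that converts property W into the structural hypotheses (projections of arbitrary trace, short unitary paths, comparison) required there, and with the ``only if'' direction following from cocycle-conjugacy invariance of both conditions (the paper cites \cite[Propositions 4.2 and 4.5]{Na5} where you verify the model action directly, which is an equivalent check). The ``hard part'' you anticipate in the ``if'' direction---upgrading the injective homomorphism $\mathcal{W}\rtimes_{\alpha}\Gamma\hookrightarrow\mathcal{W}$ into dynamical input---does not arise, because that embedding appears verbatim as a hypothesis of \cite[Theorem 8.1]{Na5}; as you suspected in your closing sentence, this step collapses to a restatement, and the paper's proof of the ``if'' part is literally ``immediate from \cite[Theorem 8.1]{Na5} and Theorem \ref{thm:homotopy}.''
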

\begin{proof}
\cite[Proposition 4.2]{Na5}, \cite[Theorem 4.5]{Na5} and \cite[Theorem 8.1]{Na5} imply the only if part. 
The if part is an immediate consequence of \cite[Theorem 8.1]{Na5} and Theorem \ref{thm:homotopy}. 
\end{proof}

\section{First cohomology vanishing type theorem}

In this section, we shall show a first cohomology vanishing type theorem (Corollary \ref{cor:vanishing}). 
This is a corollary of a Rohlin type theorem (Theorem \ref{thm:Rohlin}). 

The following lemma is well known among experts. See, for example, 
\cite[Theorem 4.8]{Ka} for a similar (but not the same) result. 
For the reader's convenience, we shall give a proof based on Ocneanu's classification theorem 
\cite[Corollary 1.4]{Oc}. 

\begin{lem}\label{lem:outer}
Let $\Gamma$ be a countable discrete amenable group, and let $N$ be a normal subgroup 
of $\Gamma$. If 
$\gamma$ is an outer action of $\Gamma$ on the injective II$_1$ factor $\mathcal{R}_0$ and 
$g_0\notin N$, then $\gamma_{g_0}$ induces a properly outer automorphism of 
$(\mathcal{R}_0)_{\omega}^{\gamma|_N}$. 
\end{lem}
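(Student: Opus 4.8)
The plan is to reduce the statement to a single, concretely understood outer action by invoking Ocneanu's classification, and then to read off proper outerness from the structure of that model.

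First I would record that the conclusion makes sense. Since $N$ is normal, $g_0^{-1}Ng_0=N$, so for $x\in(\mathcal{R}_0)_{\omega}^{\gamma|_N}$ and $n\in N$ one has $\gamma_n(\gamma_{g_0}(x))=\gamma_{g_0}(\gamma_{g_0^{-1}ng_0}(x))=\gamma_{g_0}(x)$. Thus $\gamma_{g_0}$ maps $(\mathcal{R}_0)_{\omega}^{\gamma|_N}$ onto itself and genuinely induces an automorphism there. Next I would prove that the desired conclusion is a cocycle conjugacy invariant of $\gamma$. If $(u_g)$ is a $\gamma$-cocycle, then each $u_g$ lies in $\mathcal{R}_0$ and hence acts trivially on the asymptotic centralizer $(\mathcal{R}_0)_{\omega}$, because a unitary of $\mathcal{R}_0$ commutes asymptotically with every central sequence. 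Consequently the automorphism induced on $(\mathcal{R}_0)_{\omega}$ by $\mathrm{Ad}(u_g)\circ\gamma_g$ coincides with the one induced by $\gamma_g$, and a genuine conjugacy of $\Gamma$-actions on $\mathcal{R}_0$ induces a conjugacy of the $\Gamma$-actions on $(\mathcal{R}_0)_{\omega}$ carrying $N$-fixed points to $N$-fixed points and intertwining the two copies of $\gamma_{g_0}$. Hence proper outerness of $\gamma_{g_0}$ on $(\mathcal{R}_0)_{\omega}^{\gamma|_N}$ depends only on the cocycle conjugacy class of $\gamma$.

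Since $\Gamma$ is amenable (so $N$ is amenable too) and $\gamma$ is outer, Ocneanu's classification theorem \cite[Corollary 1.4]{Oc} shows that any two outer actions of $\Gamma$ on $\mathcal{R}_0$ are cocycle conjugate. It therefore suffices to verify the claim for one convenient model, for which I would take the Bernoulli shift $\beta$ of $\Gamma$ on $\mathcal{R}_0=\overline{\bigotimes}_{g\in\Gamma}\mathcal{R}$, which is outer. Here the relevant central sequences are those concentrated on sites escaping to infinity, and $\beta_n$ shifts the site index $h$ to $nh$; the corresponding site-orbits are the right cosets $Nh$, which, $N$ being normal, are indexed by $Q:=\Gamma/N$. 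Ocneanu's noncommutative Rohlin analysis controls $(\mathcal{R}_0)_{\omega}^{\beta|_N}$ and produces on it a residual action of $Q$ that is again free. As $g_0\notin N$, the class $g_0N$ is nontrivial in $Q$ and acts on the coset index $N h\mapsto g_0Nh$ without fixed points, so the associated automorphism $\beta_{g_0}$ is properly outer on $(\mathcal{R}_0)_{\omega}^{\beta|_N}$; transporting back along the cocycle conjugacy gives the lemma.

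The main obstacle is exactly this last verification: making rigorous that passage to the $N$-fixed points of the asymptotic centralizer \emph{preserves}, rather than destroys, the outerness contributed by $g_0\notin N$. Concretely, one must control the von Neumann algebra $(\mathcal{R}_0)_{\omega}^{\beta|_N}$ and establish freeness of the residual $Q$-action on it, which is where Ocneanu's Rohlin-type machinery for outer actions of amenable groups on $\mathcal{R}$ does the real work; the heuristic that ``shifting by $g_0N\neq e$ moves every coset'' must be upgraded to a genuine proper-outerness statement on the central sequence algebra. Everything preceding this step is formal bookkeeping, so the proof stands or falls on the model computation.
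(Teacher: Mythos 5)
Your reduction steps are fine and match the paper's setup: the normality check, the observation that cocycles act trivially on the asymptotic centralizer so the statement is a cocycle conjugacy invariant, and the appeal to Ocneanu's theorem \cite[Corollary 1.4]{Oc} are all correct and are exactly the formal part of the argument. The problem is that you then reduce to the Bernoulli shift of $\Gamma$ itself and assert that $(\mathcal{R}_0)_{\omega}^{\beta|_N}$ carries a ``residual free action of $Q=\Gamma/N$.'' That assertion \emph{is} the lemma for the model action; it is not a consequence of the heuristic that $g_0N$ permutes the cosets $Nh$ without fixed points, because the von Neumann algebraic central sequence algebra of an infinite tensor product is not exhausted by sequences ``concentrated on sites escaping to infinity,'' and no specific result of Ocneanu is cited that would deliver freeness of the induced $Q$-action on $(\mathcal{R}_0)_{\omega}^{\beta|_N}$. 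As you acknowledge yourself, the proof stands or falls on this computation, and it is not carried out; so as written there is a genuine gap.

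The paper sidesteps exactly this difficulty with a different use of Ocneanu's theorem: rather than replacing $\gamma$ by the Bernoulli shift of $\Gamma$, it tensors $\gamma$ with the Bernoulli shift $\beta$ of the \emph{quotient} $\Gamma/N$, i.e.\ replaces $\gamma$ by $\delta_g=\gamma_g\otimes\beta_{\pi(g)}$ on $\mathcal{R}_0\bar{\otimes}\mathcal{R}_0$, which is cocycle conjugate to $\gamma$. Elements of the form $(1\otimes y_n)_n$ are then automatically fixed by $\delta|_N$ (since $\pi(n)$ is trivial for $n\in N$), and since $\beta_{\pi(g_0)}$ is an outer automorphism of $\mathcal{R}_0$, Connes' result \cite[Theorem 3.2]{C3} produces a central sequence $(y_n)_n$ with $(\beta_{\pi(g_0)}(y_n))_n\neq(y_n)_n$; pulling back gives a single element of $(\mathcal{R}_0)_{\omega}^{\gamma|_N}$ moved by $\gamma_{g_0}$, with no structural analysis of the fixed-point central sequence algebra needed. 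The upgrade from ``nontrivial'' to ``properly outer'' is then done by quoting that $(\mathcal{R}_0)_{\omega}^{\gamma|_N}$ is a factor (\cite[Lemma 4.1]{MS2}), so properly outer reduces to outer, and by a reindexing argument that, given any candidate implementing unitary $(u_n)_n$, produces a commuting copy of $(y_n)_n$ still moved by $\gamma_{g_0}$. If you want to salvage your approach, you would need to either supply the missing freeness statement for the Bernoulli model of $\Gamma$ restricted to $N$, or adopt the quotient-Bernoulli tensoring trick, which is what makes the model computation trivial.
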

\begin{proof}
Since $N$ is a normal subgroup, it is clear that $\gamma_{g_0}$ induces an automorphism 
of $(\mathcal{R}_0)_{\omega}^{\gamma|_N}$. 
First, we shall show that $\gamma_{g_0}$ is not trivial as an automorphism of 
$(\mathcal{R}_0)_{\omega}^{\gamma|_N}$. 
Let $\pi$ be the quotient map from $\Gamma$ to $\Gamma/ N$, and let $\beta$ be the 
Bernoulli shift action of $\Gamma /N$ on 
$\mathcal{R}_0\cong \bigotimes_{\pi (g)\in \Gamma /N}\mathcal{R}_0$. 
Define an action $\delta$ of $\Gamma$ on 
$\mathcal{R}_0\cong \mathcal{R}_0 \bar{\otimes}\mathcal{R}_{0}$ by 
$\delta_g:= \gamma_g\otimes \beta_{\pi (g)}$ for any $g\in\Gamma$. 
By Ocneanu's classification theorem \cite[Corollary 1.4]{Oc}, $\gamma$ on $\mathcal{R}_0$ and 
$\delta$ on $\mathcal{R}_0 \bar{\otimes}\mathcal{R}_{0}$ are cocycle conjugate. Hence 
there exists an isomorphism $\Phi$ from $(\mathcal{R}_0)_{\omega}$ onto 
$(\mathcal{R}_0 \bar{\otimes}\mathcal{R}_{0})_{\omega}$ such that 
$\Phi\circ \gamma_g=\delta_g\circ \Phi$ for any $g\in \Gamma$. 
Since $\beta_{\pi (g_0)}$ is an outer automorphism of $\mathcal{R}_0$, there exists an element $(x_n)_n$ 
in $(\mathcal{R}_0)_{\omega}$ such that $(\beta_{\pi(g_0)}(x_n))_n\neq (x_n)_n$ by \cite[Theorem 3.2]{C3}. 
Put $(y_n)_n:=\Phi^{-1}((1\otimes x_n)_n)\in (\mathcal{R}_0)_{\omega}$. 
Then it is easy to see that we have $(y_n)_n\in (\mathcal{R}_0)_{\omega}^{\gamma|_N}$ and 
$(\gamma_{g_0}(y_n))_n\neq (y_n)_n$. Finally, we shall show that $\gamma_{g_0}$ is properly outer 
as an automorphism of $(\mathcal{R}_0)_{\omega}^{\gamma|_N}$. 
Since $(\mathcal{R}_0)_{\omega}^{\gamma|_N}$ is a factor (see, for example, \cite[Lemma 4.1]{MS2}), 
it is enough to show that $\gamma_{g_0}$ is outer as an automorphism of 
$(\mathcal{R}_0)_{\omega}^{\gamma|_N}$. 
In particular, we shall show that for any element $(u_n)_n$ in $(\mathcal{R}_0)_{\omega}^{\gamma|_N}$, 
there exists an element $(z_n)_n$ in $(\mathcal{R}_0)_{\omega}^{\gamma|_N}$ such that 
$(u_nz_n)_n=(z_nu_n)_n$ and $(\gamma_{g_0}(z_n))_n\neq (z_n)_n$. 
Taking a suitable subsequence of $(y_n)_n$ (or the reindexing argument), we obtain the desired element 
$(z_n)_n$. Consequently, the proof is complete. 
\end{proof}

Consider a semidirect product group $N\rtimes\mathbb{Z}$. 
For $g\in N$ and $m\in\mathbb{Z}$, 
let $(g, m)$ denote an element in $N\rtimes \mathbb{Z}$. Note that we have 
$N\rtimes \mathbb{Z}=\{(g,m)\; |\; g\in N, m\in\mathbb{Z}\}$.
The following lemma is an analogous lemma of \cite[Lemma 6.2]{Na2}. 
See also \cite[Theorem 3.4]{MS1}. 

\begin{lem}\label{lem:6.2}
Let $\Gamma$ be a semidirect product $N\rtimes \mathbb{Z}$ where $N$ is a countable discrete 
amenable group, and let $\alpha$ be a strongly outer action of $\Gamma$ on $\mathcal{W}$. 
Then for any $k\in\mathbb{N}$, there exists a positive contraction $f$ in 
$F(\mathcal{W})^{\alpha|_{N}}$ such 
that 
$$
\tau_{\mathcal{W}, \omega}(f)=\frac{1}{k}\quad \text{and} \quad f\alpha_{(\iota,j)}(f)=0
$$
for any $1\leq j \leq k-1$. 
\end{lem}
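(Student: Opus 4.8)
The goal is to produce, for each $k$, a positive contraction $f$ in the $N$-fixed point algebra $F(\mathcal{W})^{\alpha|_N}$ whose trace is $1/k$ and which is ``Rohlin-like'' with respect to the generator $(\iota,1)$ of the $\mathbb{Z}$-factor, in the sense that $f$ is orthogonal to its translates $\alpha_{(\iota,j)}(f)$ for $1\le j\le k-1$.

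The plan is to work in the fixed point algebra $F(\mathcal{W})^{\alpha|_N}$ and exploit the automorphism induced on it by $\alpha_{(\iota,1)}$. Since $N$ is normal in $\Gamma=N\rtimes\mathbb{Z}$, the generator $\beta:=\alpha_{(\iota,1)}$ restricts to an automorphism of $F(\mathcal{W})^{\alpha|_N}$. The first key step is to establish that this induced automorphism has a suitable outerness property: concretely, on the tracial von Neumann completion $(\pi_{\tau_{\mathcal{W}}}(\mathcal{W})^{''})_\omega^{\widetilde{\alpha|_N}}$ (which, via the isomorphism $F(\mathcal{W})^{\alpha|_N}/J^{\alpha|_N}_{\tau_{\mathcal{W}}}\cong (\pi_{\tau_{\mathcal{W}}}(\mathcal{W})^{''})_\omega^{\widetilde{\alpha|_N}}$ recorded in the preliminaries) is a factor, the automorphism $\beta$ is properly outer. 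This is exactly the von Neumann-algebraic input supplied by Lemma \ref{lem:outer} applied to $\Gamma$, $N$, and $g_0=(\iota,1)\notin N$, after transporting the strongly outer action $\alpha$ to an outer action of $\Gamma$ on $\mathcal{R}_0$ (using that the relevant central sequence factor does not see more than the outer conjugacy class, and that $\pi_{\tau_{\mathcal{W}}}(\mathcal{W})^{''}\cong\mathcal{R}_0$).

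Once proper outerness of $\beta$ on the factor $(\mathcal{R}_0)_\omega^{\widetilde{\alpha|_N}}$ is in hand, the second step is a Rohlin-tower construction at the von Neumann level. For a single properly outer automorphism of the injective II$_1$ factor one has a Rohlin tower of arbitrary height $k$: there is a projection $e$ with $\beta^j(e)$, $0\le j\le k-1$, mutually orthogonal and summing to $1$, hence $\tau(e)=1/k$. Restricting attention to a central-sequence factor fixed by $N$ and to the single automorphism $\beta$, I would invoke precisely this classical fact (Connes' Rohlin theorem for a single aperiodic automorphism, in the ultrapower formulation) to obtain a projection $\bar e$ in $(\mathcal{R}_0)_\omega^{\widetilde{\alpha|_N}}$ with $\tau_\omega(\bar e)=1/k$ and $\bar e\,\beta^j(\bar e)=0$ for $1\le j\le k-1$. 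This mirrors exactly the structure of \cite[Lemma 6.2]{Na2}, which is the cited analogue.

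The final step is to lift the von Neumann-level projection back to $F(\mathcal{W})^{\alpha|_N}$. Using the surjection $F(\mathcal{W})^{\alpha|_N}\to (\mathcal{R}_0)_\omega^{\widetilde{\alpha|_N}}$ together with a projectivity/lifting argument and Kirchberg's $\varepsilon$-test, I would produce a self-adjoint (in fact positive contraction) $f$ in $F(\mathcal{W})^{\alpha|_N}$ whose image is $\bar e$; the trace condition $\tau_{\mathcal{W},\omega}(f)=1/k$ is read off directly since $\tau_{\mathcal{W},\omega}$ factors through the quotient, and the orthogonality relations $f\,\alpha_{(\iota,j)}(f)=0$ for $1\le j\le k-1$ are arranged exactly (not merely approximately) by a standard reindexing/functional-calculus correction that upgrades the approximate orthogonality coming from the lift to genuine orthogonality. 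I expect the main obstacle to be this last exact-lifting step: the quotient by $J^{\alpha|_N}_{\tau_{\mathcal{W}}}$ destroys norm control, so one cannot directly lift a projection to a projection, and the $f\,\alpha_{(\iota,j)}(f)=0$ relations must be forced at the $C^*$-level via a careful $\varepsilon$-test along a defining sequence, precisely as in the proof of \cite[Lemma 6.2]{Na2}. Everything else is a faithful transcription of that argument into the present equivariant setting for $F(\mathcal{W})$.
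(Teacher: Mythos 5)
Your proposal follows the paper's proof essentially step for step: apply Lemma \ref{lem:outer} (with $g_0=(\iota,j)\notin N$ for $j\neq 0$) to see that $\tilde{\alpha}_{(\iota,1)}$ acts aperiodically on $(\pi_{\tau_{\mathcal{W}}}(\mathcal{W})'')_{\omega}^{\tilde{\alpha}|_N}$, invoke Connes' Rohlin tower theorem there to get a projection of trace $1/k$ with orthogonal translates, lift a positive contraction through the isomorphism $F(\mathcal{W})^{\alpha|_N}/J_{\tau_{\mathcal{W}}}^{\alpha|_N}\cong(\pi_{\tau_{\mathcal{W}}}(\mathcal{W})'')_{\omega}^{\tilde{\alpha}|_N}$, and upgrade the resulting $\|\cdot\|_2$-approximate orthogonality to exact orthogonality by the functional-calculus/$\varepsilon$-test argument of \cite[Lemma 6.2]{Na2}. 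This is exactly the route taken in the paper, so the proposal is correct and not a different approach.
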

\begin{proof} 
Since $\pi_{\tau_{\mathcal{W}}}(\mathcal{W})^{''}$ is isomorphic to the injective II$_1$ factor, 
Lemma \ref{lem:outer} implies that $\tilde{\alpha}_{(\iota, 1)}$ is an aperiodic automorphism of 
$(\pi_{\tau_{\mathcal{W}}}(\mathcal{W})^{''})_{\omega}^{\tilde{\alpha}|_N}$. 
Hence it follows from \cite[Theorem 1.2.5]{C2} that 
there exists a partition of unity $\{P_{j}\}_{j=1}^k$ consisting of projections in 
$(\pi_{\tau_{\mathcal{W}}}(\mathcal{W})^{''})_{\omega}^{\tilde{\alpha}|_N}$ such that 
$\tilde{\alpha}_{(\iota, 1)}(P_{j})=P_{j+1}$ for any $1\leq j \leq k-1$. 
Since $\pi_{\tau_{\mathcal{W}}}$ induces an isomorphism from 
$F(\mathcal{W})^{\alpha|_N}/J_{\tau_{\mathcal{W}}}^{\alpha|_N}$ onto 
$(\pi_{\tau_{\mathcal{W}}}(\mathcal{W})^{''})_{\omega}^{\tilde{\alpha}|_N}$ (see Section \ref{sec:fixed}), 
there exists a positive contraction $[(e_n)_n]$ in $F(\mathcal{W})^{\alpha|_N}$ such that 
$(\pi_{\tau_{\mathcal{W}}}(e_n))_n=P_1$ in 
$(\pi_{\tau_{\mathcal{W}}}(\mathcal{W})^{''})_{\omega}^{\tilde{\alpha}|_N}$. 
Then we have 
$$
\lim_{n\to\omega} \| \pi_{\tau_{\mathcal{W}}}(e_n \alpha_{(\iota ,j)}(e_n))\|_2=0
\quad 
\text{and}
\quad
\tau_{\mathcal{W}, \omega}([(e_n)_n])=\tilde{\tau}_{\mathcal{W}, \omega}(P_1)=\dfrac{1}{k}
$$
for any $1\leq j \leq k-1$, 
where $\tilde{\tau}_{\mathcal{W}, \omega}$ is the induced tracial state on 
$(\pi_{\tau_{\mathcal{W}}}(\mathcal{W})^{''})_{\omega}^{\tilde{\alpha}|_N}$ by $\tau_{\mathcal{W}}$. 
The rest of the proof is the same as \cite[Lemma 6.2]{Na2}. (See also \cite[Proposition 3.3]{MS1}.) 
\end{proof}

Using Proposition \ref{pro:MS-pro}, Theorem \ref{thm:homotopy} (we need to assume that 
$\alpha|_N$ has property W) and Lemma \ref{lem:6.2} instead of 
\cite[Proposition 4.1]{Na2}, \cite[Proposition 4.2]{Na2}, \cite[Theorem 5.8]{Na2} and 
\cite[Lemma 6.2]{Na2}, 
we obtain the following Rohlin type theorem by essentially the same arguments in the proofs of 
\cite[Lemma 6.3]{Na2} and \cite[Theorem 6.4]{Na2}. 
Note that these arguments are based on \cite{Kis0} and \cite{Kis00}.

\begin{thm}\label{thm:Rohlin}
Let $\Gamma$ be a semidirect product $N\rtimes \mathbb{Z}$ where $N$ is a countable discrete 
amenable group, and let $\alpha$ be a strongly outer action of $\Gamma$ on $\mathcal{W}$. 
Assume that $\alpha|_{N}$ has property W. 
Then for any $k\in\mathbb{N}$, there exists a partition on unity $\{p_{1,i}\}_{i=0}^{k-1}\cup 
\{p_{2,j}\}_{j=0}^{k}$ consisting of projections in $F(\mathcal{W})^{\alpha|_N}$ such that 
$$
\alpha_{(\iota, 1)}(p_{1,i})=p_{1, i+1}\quad \text{and} \quad \alpha_{(\iota, 1)}(p_{2,j})=p_{2,j+1}
$$
for any $0\leq i\leq k-2$ and $0\leq j\leq k-1$. 
\end{thm}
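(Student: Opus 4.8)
The plan is to follow the Rohlin-tower strategy of Kishimoto, as adapted in \cite[Lemma 6.3]{Na2} and \cite[Theorem 6.4]{Na2}, working entirely inside the fixed point algebra $F(\mathcal{W})^{\alpha|_N}$ and treating the single automorphism $\sigma := \alpha_{(\iota,1)}$. Since $N$ is normal in $\Gamma = N\rtimes\mathbb{Z}$, the automorphism $\sigma$ does indeed restrict to an automorphism of $F(\mathcal{W})^{\alpha|_N}$, so the problem becomes a single-automorphism Rohlin property problem for $\sigma$ acting on the monotracial C$^*$-algebra $F(\mathcal{W})^{\alpha|_N}$. The goal is a partition of unity built from two towers, of heights $k$ and $k+1$, that are cyclically permuted by $\sigma$ up to the top level; the mismatched heights $k$ and $k+1$ are the standard device (coprimality of consecutive integers) that lets one absorb an arbitrary remainder and obtain an exact partition rather than merely an approximate one.

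First I would use Lemma \ref{lem:6.2} as the initial input: for the given $k$ it produces a positive contraction $f$ in $F(\mathcal{W})^{\alpha|_N}$ with $\tau_{\mathcal{W},\omega}(f)=1/(2k+1)$ (applying the lemma with $2k+1$ in place of $k$, say) and $f\,\alpha_{(\iota,j)}(f)=0$ for the relevant range of $j$, giving a first approximation to a Rohlin tower whose levels are the translates $\alpha_{(\iota,j)}(f)$. This is the analog of how \cite[Lemma 6.2]{Na2} feeds into \cite[Lemma 6.3]{Na2}. Next I would invoke the structural results on $F(\mathcal{W})^{\alpha|_N}$ now available to us: by Proposition \ref{pro:MS-pro} the fixed point algebra is monotracial and has the comparison and (SI) properties listed there, and by Theorem \ref{thm:homotopy} — which applies precisely because we are assuming $\alpha|_N$ has property W — we have existence of projections of arbitrary trace, Murray-von Neumann comparison of equal-trace projections, and the Lipschitz unitary path estimate. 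These three conclusions of Theorem \ref{thm:homotopy} are exactly the ingredients that in \cite{Na2} replace $M_{2^\infty}$-stability and the quoted Propositions/Theorems, so the substitution is clean.

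The technical heart is then to promote the approximate, positive-contraction tower into genuine orthogonal projections that are exactly permuted by $\sigma$. Here I would follow Kishimoto's scheme \cite{Kis0}, \cite{Kis00}: from $f$ one manufactures an approximately $\sigma$-invariant positive element, cuts it down using the comparison theory (Proposition \ref{pro:MS-pro}(3)) and property (SI) (Proposition \ref{pro:MS-pro}(1)) to control small traces, and uses the existence of projections of prescribed trace (Theorem \ref{thm:homotopy}(1)) together with Murray-von Neumann equivalence (Theorem \ref{thm:homotopy}(3)) to replace approximate relations by exact ones. The coprimality of $k$ and $k+1$ lets one combine the two partial towers so that their levels sum to the identity of $F(\mathcal{W})^{\alpha|_N}$. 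Because all of this takes place at the level of the central sequence algebra, the successive approximations are patched together by the reindexing and diagonal arguments (Kirchberg's $\varepsilon$-test) recalled in Section \ref{sec:fixed}, which convert a sequence of increasingly good approximate solutions into an exact solution in $F(\mathcal{W})^{\alpha|_N}$.

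The main obstacle I anticipate is the exactness of the projection relations $\sigma(p_{1,i})=p_{1,i+1}$ and $\sigma(p_{2,j})=p_{2,j+1}$: the input from Lemma \ref{lem:6.2} and from von Neumann algebra Rohlin theory only yields these up to a small error in the $\tau_{\mathcal{W},\omega}$-norm or after passing to the trace-kernel quotient, whereas the statement demands honest projections and honest equalities in the C$^*$-algebra $F(\mathcal{W})^{\alpha|_N}$ itself. Closing this gap is precisely where the full strength of Theorem \ref{thm:homotopy} (particularly the unitary-path estimate (2), which lets one rotate an approximately invariant projection to an exactly invariant one with controlled cost) and property (SI) must be combined with the reindexing trick; I would expect the bookkeeping of the error estimates across the $k$ and $k+1$ levels, and the verification that the two towers can be made orthogonal while still summing to $1$, to be the most delicate part, exactly as in the proof of \cite[Theorem 6.4]{Na2}.
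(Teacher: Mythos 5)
Your proposal follows exactly the route the paper takes: the paper's own ``proof'' of Theorem \ref{thm:Rohlin} is precisely the substitution you describe --- feed Lemma \ref{lem:6.2} into the Kishimoto-style two-tower argument of \cite[Lemma 6.3]{Na2} and \cite[Theorem 6.4]{Na2}, with Proposition \ref{pro:MS-pro} and Theorem \ref{thm:homotopy} (under the property W hypothesis on $\alpha|_N$) replacing the structural results \cite[Proposition 4.1]{Na2}, \cite[Proposition 4.2]{Na2} and \cite[Theorem 5.8]{Na2}, and the reindexing/$\varepsilon$-test machinery converting approximate towers into exact projections. Your identification of the key ingredients and of the delicate point (exactness of the relations $\alpha_{(\iota,1)}(p_{1,i})=p_{1,i+1}$) is accurate, so the proposal is correct and essentially coincides with the paper's argument.
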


Theorem \ref{thm:homotopy}, Theorem \ref{thm:Rohlin} and Herman-Ocneanu's argument 
\cite[Theorem 1]{HO} (see also remarks after \cite[Lemma 1]{HO}, \cite{I0} and \cite{Kis3}) 
imply the following corollary. 

\begin{cor}\label{cor:vanishing}
Let $\Gamma$ be a semidirect product $N\rtimes \mathbb{Z}$ where $N$ is a countable discrete 
amenable group, and let $\alpha$ be a strongly outer action of $\Gamma$ on $\mathcal{W}$. 
Assume that $\alpha|_{N}$ has property W and $S$ is a countable subset in $F(\mathcal{W})^{\alpha|_N}$. 
For any unitary element $u$ in $F(\mathcal{W})^{\alpha|_N}\cap S^{\prime}$, 
there exists a unitary element $v$ in $F(\mathcal{W})^{\alpha|_N}\cap S^{\prime}$ 
such that $u=v\alpha_{(\iota, 1)}(v)^{*}$. 
\end{cor}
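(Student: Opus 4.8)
Corollary \ref{cor:vanishing} — the plan.

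The statement is a first cohomology vanishing result: given a unitary cocycle (here a single unitary $u$ that commutes with a prescribed countable set $S$), I want to trivialize it by a coboundary $v\alpha_{(\iota,1)}(v)^*$ living in the same relative commutant. This is the classical Hermann–Ocneanu mechanism, and the reference to \cite[Theorem 1]{HO} signals exactly the approach: the Rohlin towers produced by Theorem \ref{thm:Rohlin} let one build $v$ tower-by-tower. The plan is to run the standard telescoping construction. Fix a large $k\in\mathbb{N}$ and invoke Theorem \ref{thm:Rohlin} (applied with $S\cup\{u\}$ adjoined so that the projections can be arranged almost commuting with $S$ and $u$, via the reindexing/$\varepsilon$-test) to obtain the two Rohlin towers $\{p_{1,i}\}_{i=0}^{k-1}$ and $\{p_{2,j}\}_{j=0}^{k}$ of projections in $F(\mathcal{W})^{\alpha|_N}$ satisfying $\alpha_{(\iota,1)}(p_{\bullet,i})=p_{\bullet,i+1}$. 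On each tower one defines $v$ by the usual partial-product formula: on the level-$i$ projection, $v$ should be a product of the form $u^*\alpha_{(\iota,1)}(u^*)\cdots$ telescoped so that the coboundary relation $v\alpha_{(\iota,1)}(v)^*=u$ holds up to the error at the top of each tower.

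First I would make this telescoping precise. Writing $u_i:=\alpha_{(\iota,1)}^{i}(u)$, on the tower $\{p_{1,i}\}$ I set $w:=\sum_{i=0}^{k-1} u_0 u_1\cdots u_{i-1}\,p_{1,i}$ (empty product $=1$ at $i=0$), and similarly on the second tower; then $v:=w+w'$ where $w'$ is the analogous element for $\{p_{2,j}\}$. Because the projections are $\alpha_{(\iota,1)}$-shifted and $u$ commutes with $S$, a direct computation gives $v\alpha_{(\iota,1)}(v)^*=u$ on all but the top level of each tower, the defect being supported on $p_{1,0}+p_{2,0}$ (or the top projections), which has trace $O(1/k)$. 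The element $v$ so defined is a partial isometry that is unitary off a small projection; I would then correct it to an honest unitary in $F(\mathcal{W})^{\alpha|_N}\cap S'$.

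The correction step is where Theorem \ref{thm:homotopy} does the real work, and this is the main obstacle. The naive telescoped $v$ is only \emph{approximately} a coboundary — the cohomological defect is concentrated on a projection of small trace — so I cannot stop with a single $k$. The standard Hermann–Ocneanu resolution is an iterative scheme: having trivialized $u$ up to an error $u^{(1)}$ supported on a projection of trace $\leq 1/k_1$, one repeats the construction with $u^{(1)}$ in place of $u$ and a larger $k_2$, producing $v^{(1)}$ that kills most of $u^{(1)}$, and so on; the infinite product $v=\cdots v^{(2)}v^{(1)}v^{(0)}$ converges (in the relevant $\|\cdot\|_2$-type sense on $F(\mathcal{W})$) provided the successive errors shrink summably. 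The two technical inputs that guarantee this work are exactly parts (2) and (3) of Theorem \ref{thm:homotopy}: the uniform Lipschitz bound $\mathrm{Lip}(U)\leq 2\pi$ lets me pass from "$u^{(m)}$ close to $1$ on a large projection" to "$u^{(m)}$ is a genuine small-norm perturbation of a unitary I can path-connect to $1$ inside $F(\mathcal{W})^{\alpha|_N}$," and the Murray–von Neumann equivalence of equal-trace projections (part (3)) lets me absorb the leftover small-trace defects into a genuine unitary correction. I expect the bookkeeping to control the $S$-commutation throughout the iteration (each $v^{(m)}$ stays in $S'$ because the towers are chosen almost commuting with $S$, and the limit is extracted by Kirchberg's $\varepsilon$-test) to be the most delicate part, but no new idea beyond \cite{HO} and Theorem \ref{thm:homotopy} should be needed.
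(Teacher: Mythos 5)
Your overall strategy (Rohlin towers from Theorem \ref{thm:Rohlin}, telescoped partial products $u\alpha_{(\iota,1)}(u)\cdots\alpha_{(\iota,1)}^{i-1}(u)$ over the two towers, then a correction step, with the $S$-commutation handled by reindexing and Kirchberg's $\varepsilon$-test) is exactly the Herman--Ocneanu route the paper invokes. But the correction step as you describe it has a genuine gap. After one round of telescoping, the defect $v\alpha_{(\iota,1)}(v)^*-u$ is supported on the boundary projections of the towers and involves the holonomies $c_k:=u\alpha_{(\iota,1)}(u)\cdots\alpha_{(\iota,1)}^{k-1}(u)$ and $c_{k+1}$; its \emph{trace} is $O(1/k)$ but its \emph{operator norm} can be $2$, since $\|1-c_k\|$ is not small. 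Your proposed iteration therefore produces errors that shrink only in $\|\cdot\|_{2,\tau_{\mathcal{W},\omega}}$, and an infinite product converging in that seminorm yields an element of the tracial quotient $F(\mathcal{W})^{\alpha|_N}/J_{\tau_{\mathcal{W}}}^{\alpha|_N}\cong(\pi_{\tau_{\mathcal{W}}}(\mathcal{W})'')_{\omega}^{\tilde{\alpha}|_N}$, not a unitary in the C$^*$-algebra $F(\mathcal{W})^{\alpha|_N}$ itself. Relatedly, the step where you claim the Lipschitz bound lets you pass from ``$u^{(m)}$ close to $1$ on a large projection'' to ``$u^{(m)}$ is a small-norm perturbation'' is not a valid implication: no homotopy statement converts trace-norm smallness into operator-norm smallness.

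The missing idea is that Theorem \ref{thm:homotopy}(2) must be used \emph{inside} the tower construction, not afterwards: connect $c_k^*$ (and $c_{k+1}^*$) to $1$ by paths $Z$ in $F(\mathcal{W})^{\alpha|_N}\cap S'$ with $\mathrm{Lip}(Z)\le 2\pi$ uniformly in $k$, and replace the level-$i$ entry $c_i$ by $c_i\,\alpha_{(\iota,1)}^{i}(Z(i/k))$ (and analogously on the second tower). Consecutive levels then differ from the exact cocycle relation by $\|Z((i+1)/k)-Z(i/k)\|\le 2\pi/k$ in operator norm, and the wrap-around mismatch is cancelled exactly because $Z(1)=c_k^*$ kills the holonomy; this gives $\|v\alpha_{(\iota,1)}(v)^*-u\|=O(1/k)$ in norm with $v$ still commuting with $S$ up to $O(1/k)$. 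A single application of the $\varepsilon$-test as $k\to\infty$ then produces the exact coboundary in $F(\mathcal{W})^{\alpha|_N}\cap S'$ --- no infinite product and no trace-norm convergence are needed. This is precisely why the paper states Theorem \ref{thm:homotopy}(2) with the uniform constant $2\pi$, and it is the content of the remarks after Lemma 1 in the Herman--Ocneanu paper and of Kishimoto's and Izumi's refinements cited alongside it.
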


\section{Main theorem}

In this section we shall show the main theorem. 
Recall that $\mathfrak{C}$ is the smallest class of countable discrete groups with the 
following properties:(i) $\mathfrak{C}$ contains the trivial group, (ii) $\mathfrak{C}$ is 
closed under isomorphisms, countable increasing unions and extensions by $\mathbb{Z}$. 
Note that if $\Gamma$ is an extension of $N$ by $\mathbb{Z}$, then $\Gamma$ is isomorphic to 
a semidirect product $N\rtimes\mathbb{Z}$.  

The following lemma is an easy consequence of the definition of 
property W and the diagonal argument. 
\begin{lem}\label{lem:unions}
Let $\Gamma$ be an increasing union $\bigcup_{m\in\mathbb{N}}\Gamma_m$ of discrete 
countable groups $\Gamma_m$, and let  $\alpha$ be an action of $\Gamma$ on $\mathcal{W}$. 
If $\alpha|_{\Gamma_m}$ has property W for any $m\in\mathbb{N}$, then $\alpha$ has property W. 
\end{lem}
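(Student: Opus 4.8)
The plan is to prove that property W is preserved under countable increasing unions of groups, i.e. Lemma \ref{lem:unions}. Property W consists of two conditions: (i) the existence of a unital homomorphism $M_2(\mathbb{C})\to F(\mathcal{W})^\alpha$, and (ii) a uniqueness-up-to-unitary-equivalence statement for normal elements in $F(\mathcal{W})^\alpha$ with matching spectra and trace data. The essential point is the comparison of fixed-point algebras: since $\Gamma = \bigcup_m \Gamma_m$ with $\Gamma_m$ increasing, an element of $F(\mathcal{W})$ is fixed by all of $\alpha$ precisely when it is fixed by $\alpha|_{\Gamma_m}$ for every $m$. So $F(\mathcal{W})^\alpha = \bigcap_m F(\mathcal{W})^{\alpha|_{\Gamma_m}}$, and the strategy is to produce the objects required by property W for $\alpha$ by approximating them within the larger fixed-point algebras $F(\mathcal{W})^{\alpha|_{\Gamma_m}}$ and then passing to the intersection via a diagonal/reindexing argument.

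For condition (i), I would first invoke property W for $\alpha|_{\Gamma_1}$ to obtain a unital copy of $M_2(\mathbb{C})$ inside $F(\mathcal{W})^{\alpha|_{\Gamma_1}}$. This copy need not lie in the smaller algebra $F(\mathcal{W})^\alpha$, so the work is to perturb it. The natural route is to lift the four matrix units to bounded sequences in $\ell^\infty(\mathbb{N},\mathcal{W})$ and observe that, because each $\Gamma_m$ is contained in $\Gamma_{m+1}$, one can use the hypothesis that $\alpha|_{\Gamma_m}$ has property W for \emph{every} $m$ to approximate the matrix units increasingly well by elements that are fixed by $\alpha|_{\Gamma_m}$. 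Kirchberg's $\varepsilon$-test (\cite[Lemma A.1]{Kir2}), cited in the preliminaries, is precisely the tool that converts such a family of approximate constraints, one for each $m$, into an exact solution in the intersection $F(\mathcal{W})^\alpha$, yielding a genuine unital homomorphism $M_2(\mathbb{C})\to F(\mathcal{W})^\alpha$.

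For condition (ii), suppose $x,y\in F(\mathcal{W})^\alpha$ are normal with $\mathrm{Sp}(x)=\mathrm{Sp}(y)$ and $0<\tau_{\mathcal{W},\omega}(f(x))=\tau_{\mathcal{W},\omega}(f(y))$ for all $f\in C(\mathrm{Sp}(x))_+\setminus\{0\}$. Since $x,y$ also lie in $F(\mathcal{W})^{\alpha|_{\Gamma_m}}$ for each $m$ and satisfy the same spectral and tracial hypotheses there, property W for $\alpha|_{\Gamma_m}$ supplies a unitary $w_m\in F(\mathcal{W})^{\alpha|_{\Gamma_m}}$ with $w_m x w_m^* = y$. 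The difficulty is that these unitaries live in successively smaller algebras as $m$ grows in the wrong direction — I want a \emph{single} unitary in the intersection. So I would instead phrase the requirement as a list of approximate conditions ``$w$ is unitary, $\|wxw^*-y\|<1/m$, and $w$ is approximately fixed by $\alpha|_{\Gamma_m}$'' and feed this to the reindexing/diagonal argument (again Kirchberg's $\varepsilon$-test) to extract a unitary $w\in F(\mathcal{W})^\alpha$ implementing the equivalence exactly.

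The main obstacle, and the only genuinely delicate point, is the order of the two limits: the ultrafilter limit $n\to\omega$ defining $F(\mathcal{W})$ and the union limit $m\to\infty$ defining $\Gamma$. One must ensure that the approximations used to push elements into $F(\mathcal{W})^{\alpha|_{\Gamma_m}}$ for larger and larger $m$ can be simultaneously realized on a single representing sequence, so that the resulting object is genuinely $\alpha$-invariant rather than merely $\alpha|_{\Gamma_m}$-invariant for each fixed $m$. This is exactly what the diagonal argument handles, and it is why the statement asserts the lemma is ``an easy consequence of the definition of property W and the diagonal argument'': once the problem is correctly set up as a countable system of approximate constraints, the machinery already developed in Section \ref{sec:fixed} closes the gap with no further structural input.
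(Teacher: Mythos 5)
Your proposal is correct and matches the paper's (unwritten) argument: the paper dismisses this lemma as ``an easy consequence of the definition of property W and the diagonal argument,'' and your write-up supplies exactly that --- the identification $F(\mathcal{W})^{\alpha}=\bigcap_{m}F(\mathcal{W})^{\alpha|_{\Gamma_m}}$ together with Kirchberg's $\varepsilon$-test applied to the countable system of constraints (matrix-unit relations, resp.\ unitarity and conjugation, plus approximate $\alpha_g$-invariance for an enumeration of $\Gamma$), each finite subsystem of which is solved exactly inside some $F(\mathcal{W})^{\alpha|_{\Gamma_m}}$ by hypothesis. The only cosmetic point is that in part (i) one need not ``perturb'' a single initial copy of $M_2(\mathbb{C})$: one simply uses a fresh exact copy in $F(\mathcal{W})^{\alpha|_{\Gamma_m}}$ for each $m$ as the approximate solution fed to the $\varepsilon$-test, as your own description in fact does.
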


The following lemma is an application of Corollary \ref{cor:vanishing}. 

\begin{lem}\label{lem:extension-z}
Let $\Gamma$ be a semidirect product $N\rtimes \mathbb{Z}$ where $N$ is a countable discrete 
amenable group, and let $\alpha$ be a strongly outer action of $\Gamma$ on $\mathcal{W}$. 
If $\alpha|_{N}$ has property W, then $\alpha$ has property W. 
\end{lem}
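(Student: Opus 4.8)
The plan is to verify the two defining conditions of property W for the full action $\alpha$ of $\Gamma = N \rtimes \mathbb{Z}$, using as the principal tool the first cohomology vanishing type theorem (Corollary \ref{cor:vanishing}), which is available precisely because $\alpha|_N$ has property W. The key structural observation is that $F(\mathcal{W})^\alpha = (F(\mathcal{W})^{\alpha|_N})^{\alpha_{(\iota,1)}}$, so that fixed points for the whole group are exactly the fixed points, inside the larger algebra $F(\mathcal{W})^{\alpha|_N}$, of the single automorphism induced by the generator of $\mathbb{Z}$. Thus the task reduces to promoting the $M_2$-embedding and the normal-element uniqueness that hold in $F(\mathcal{W})^{\alpha|_N}$ up to the subalgebra fixed by $\alpha_{(\iota,1)}$.

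For condition (i), I would start from a unital homomorphism $\varphi \colon M_2(\mathbb{C}) \to F(\mathcal{W})^{\alpha|_N}$, which exists by hypothesis. Its image need not be fixed by $\alpha_{(\iota,1)}$, but $\alpha_{(\iota,1)}$ permutes the system of matrix units into another copy of $M_2$ with the same trace data; a standard Rohlin-tower / intertwining argument built on Theorem \ref{thm:homotopy}(2) (the Lipschitz control on paths of unitaries in $F(\mathcal{W})^{\alpha|_N}$) lets me conjugate one copy toward its image under $\alpha_{(\iota,1)}$ by a unitary, and the cohomology vanishing of Corollary \ref{cor:vanishing} then produces an honest fixed copy. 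Concretely I expect to construct a unitary $w \in F(\mathcal{W})^{\alpha|_N}$ intertwining $\varphi$ with $\alpha_{(\iota,1)}\circ\varphi$, and then solve the coboundary equation $w = v\,\alpha_{(\iota,1)}(v)^*$ by the corollary (taking $S$ to contain the matrix units), so that $\mathrm{Ad}(v)\circ\varphi$ lands in $F(\mathcal{W})^\alpha$.

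For condition (ii), suppose $x,y$ are normal elements of $F(\mathcal{W})^\alpha$ with equal spectra and equal strictly positive trace data on each $f \in C(\mathrm{Sp}(x))_+\setminus\{0\}$. Since $F(\mathcal{W})^\alpha \subseteq F(\mathcal{W})^{\alpha|_N}$, property W for $\alpha|_N$ furnishes a unitary $w \in F(\mathcal{W})^{\alpha|_N}$ with $wxw^* = y$. The point is to replace $w$ by a unitary that is genuinely $\alpha_{(\iota,1)}$-fixed. Because $x$ and $y$ are already fixed by $\alpha_{(\iota,1)}$, applying $\alpha_{(\iota,1)}$ to $wxw^*=y$ shows that $u := \alpha_{(\iota,1)}(w)^* w$ is a unitary commuting with $x$; after a reindexing one arranges $u \in F(\mathcal{W})^{\alpha|_N}\cap S'$ for the countable set $S$ generated by $x$ (and $y$). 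Corollary \ref{cor:vanishing} then writes $u = v\,\alpha_{(\iota,1)}(v)^*$ with $v$ commuting with $x$, and a short check shows $wv$ is $\alpha_{(\iota,1)}$-fixed and still conjugates $x$ to $y$, giving the required unitary in $F(\mathcal{W})^\alpha$.

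The main obstacle I anticipate is bookkeeping the cocycle/coboundary identities so that the unitary produced by Corollary \ref{cor:vanishing} simultaneously lies in the relative commutant $S'$ and, after multiplication, is exactly fixed by $\alpha_{(\iota,1)}$ rather than merely approximately so; this is where the countable-set hypothesis in the corollary and a diagonal or reindexing argument must be deployed carefully. A secondary subtlety is that in part (i) the intertwining unitary $w$ must itself be shown to exist, which relies on the homotopy and Murray--von Neumann equivalence statements of Theorem \ref{thm:homotopy}; but once $w$ is in hand, the reduction to solving $w = v\,\alpha_{(\iota,1)}(v)^*$ is clean. I expect the verification of (ii) to be the more delicate of the two, since normal elements with continuous spectrum require the full strength of property W for $\alpha|_N$ rather than the finite-dimensional approximation used in (i).
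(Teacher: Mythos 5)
Your proposal follows essentially the same route as the paper: reduce to untwisting by the single automorphism $\alpha_{(\iota,1)}$ inside $F(\mathcal{W})^{\alpha|_N}$, and in both parts produce a cocycle (an intertwining unitary for the two copies of $M_{2}(\mathbb{C})$ in (i), the obstruction $w$ versus $\alpha_{(\iota,1)}(w)$ in (ii)) which Corollary \ref{cor:vanishing} turns into a coboundary. Two small remarks. First, in (i) no Rohlin-tower or Lipschitz-path argument is needed to build the intertwiner: the paper simply uses Theorem \ref{thm:homotopy}(3) to get a partial isometry $w$ with $w^{*}w=\alpha_{(\iota,1)}(\varphi(e_{11}))$, $ww^{*}=\varphi(e_{11})$, and sets $u=\sum_{i}\varphi(e_{i1})w\,\alpha_{(\iota,1)}(\varphi(e_{1i}))$; you correctly identified Murray--von Neumann equivalence as the engine, but the tower language is an over-complication.

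Second, in (ii) your final bookkeeping has a sign slip. With your choice $u:=\alpha_{(\iota,1)}(w)^{*}w$ and the corollary's normalization $u=v\,\alpha_{(\iota,1)}(v)^{*}$, one gets $w\,\alpha_{(\iota,1)}(v)=\alpha_{(\iota,1)}(w)\,v$, and $\alpha_{(\iota,1)}(wv)=wv$ would force $\alpha_{(\iota,1)}(v)v^{*}=v\,\alpha_{(\iota,1)}(v)^{*}$, which is not automatic; so $wv$ need not be fixed. The repair is trivial (apply the corollary to $u^{*}$, or do what the paper does: take the cocycle on the other side, $c:=w\,\alpha_{(\iota,1)}(w)^{*}\in F(\mathcal{W})^{\alpha|_N}\cap\{y\}'$, write $c=v\,\alpha_{(\iota,1)}(v)^{*}$ with $v\in\{y\}'$, and check directly that $v^{*}w$ is $\alpha_{(\iota,1)}$-fixed and conjugates $x$ to $y$). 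Also, no reindexing is needed to place the cocycle in $S'$: it commutes with $y$ exactly, by the computation you sketch, and the paper takes $S=\{y\}$.
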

\begin{proof}
(i) There exists a unital 
$*$-homomorphism $\varphi$ from $M_{2}(\mathbb{C})$ to $F(\mathcal{W})^{\alpha|_N}$ by the 
assumption. Let $\{e_{ij}\}_{i,j=1}^{2}$ be the standard matrix units of $M_{2}(\mathbb{C})$. 
Since we have $0<\tau_{\mathcal{W}, \omega}(\varphi(e_{11}))
=\tau_{\mathcal{W}, \omega}(\alpha_{(\iota,1)}(\varphi(e_{11})))$, there exists an 
element $w$ in  $F(\mathcal{W})^{\alpha|_N}$ such that $w^*w
=\alpha_{(\iota, 1)}(\varphi(e_{11}))$ and $ww^*=\varphi(e_{11})$ 
by Theorem \ref{thm:homotopy}. 
Put $u:= \sum_{i=1}^{2}\varphi(e_{i1})w\alpha_{(\iota ,1)}(\varphi(e_{1i}))$. 
Then $u$ is a unitary element in $F(\mathcal{W})^{\alpha|_N}$ such that 
$\alpha_{(\iota, 1)}(\varphi(x))=u^*\varphi(x)u$ for any $x\in M_{2}(\mathbb{C})$. 
By Corollary \ref{cor:vanishing}, there exists a unitary element $v$ in $F(\mathcal{W})^{\alpha|_N}$ 
such that $u=v\alpha_{(\iota ,1)}(v)^*$. 
We have $\alpha_{(\iota, 1)}(v^*\varphi(x)v)= v^*\varphi(x)v$ for any $x\in M_{2}(\mathbb{C})$. 
Hence the map $\psi$ defined by $\psi(x):=v^*\varphi(x)v$ for any 
$x\in M_{2}(\mathbb{C})$ is a unital $*$-homomorphism from $M_{2}(\mathbb{C})$ to 
$F(\mathcal{W})^{\alpha}$. 
(ii) Let $x$ and $y$ be normal elements in $F(\mathcal{W})^{\alpha}$ such that 
$\mathrm{Sp}(x)=\mathrm{Sp}(y)$ and 
$0<\tau_{\mathcal{W}, \omega} (f(x))=\tau_{\mathcal{W}, \omega}(f(y))$
for any $f\in C(\mathrm{Sp}(x))_{+}\setminus\{0\}$. Since $x$ and $y$ are also elements in 
$F(\mathcal{W})^{\alpha|_N}$, there exists a unitary element $u$ in $F(\mathcal{W})^{\alpha|_N}$ such 
that $uxu^*=y$ by the assumption. Note that $u\alpha_{(\iota, 1)}(u)^*$ is a unitary element in 
$F(\mathcal{W})^{\alpha|_N}\cap \{ y\}^{\prime}$. Hence Corollary \ref{cor:vanishing} implies that 
there exists a unitary element $v$ in $F(\mathcal{W})^{\alpha|_N}\cap \{ y\}^{\prime}$ such that 
$u\alpha_{(\iota, 1)}(u)^*=v\alpha_{(\iota, 1)}(v)^*$. 
We have $\alpha_{(\iota ,1)}(v^*u)=v^*u$ and $v^*uxu^*v= v^*yv=y$. 
Therefore $x$ and $y$ are unitary equivalent in $F(\mathcal{W})^{\alpha}$. 
By (i) and (ii), $\alpha$ has property W. 
\end{proof}

The following theorem is the main theorem in this paper. 

\begin{thm}\label{thm:main}
Let $\Gamma$ be a countable discrete group in $\mathfrak{C}$, and let $\alpha$ be a strongly outer 
action of $\Gamma$ on $\mathcal{W}$. Then $\alpha$ is cocycle conjugate to 
$\mu^{\Gamma}\otimes \mathrm{id}_{\mathcal{W}}$ on $M_{2^{\infty}}\otimes\mathcal{W}$. 
\end{thm}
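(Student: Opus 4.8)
The plan is to verify the two hypotheses of Theorem \ref{thm:8.1} for $\alpha$, namely that $\alpha$ has property W and that there is an injective homomorphism from $\mathcal{W}\rtimes_{\alpha}\Gamma$ into $\mathcal{W}$; the cocycle conjugacy is then immediate. Throughout I would use that every $\Gamma\in\mathfrak{C}$ is (elementary) amenable, so that Theorem \ref{thm:8.1} is applicable and the reduced and full crossed products coincide.

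To obtain property W I would argue by structural induction on the class $\mathfrak{C}$. Let $\mathfrak{C}_{0}$ be the class of those $\Gamma\in\mathfrak{C}$ with the property that \emph{every} strongly outer action of $\Gamma$ on $\mathcal{W}$ has property W. I claim that $\mathfrak{C}_{0}$ satisfies the three defining closure conditions of $\mathfrak{C}$, whence $\mathfrak{C}_{0}=\mathfrak{C}$ by minimality. The trivial group lies in $\mathfrak{C}_{0}$: here $F(\mathcal{W})^{\alpha}=F(\mathcal{W})$, condition (i) of property W holds because $\mathcal{W}$ is $M_{2^{\infty}}$-stable (so there is a unital homomorphism $M_{2}(\mathbb{C})\to F(\mathcal{W})$), and condition (ii) is the classification of normal elements of $F(\mathcal{W})$ by their spectral distribution, which is supplied by the structural theory of $F(\mathcal{W})$ behind Proposition \ref{pro:MS-pro}. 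Closure under isomorphism is clear by transporting the action. For an increasing union $\Gamma=\bigcup_{m}\Gamma_{m}$ with each $\Gamma_{m}\in\mathfrak{C}_{0}$, the key observation is that the restriction of a strongly outer action to any subgroup is again strongly outer, so each $\alpha|_{\Gamma_{m}}$ has property W; Lemma \ref{lem:unions} then yields property W for $\alpha$. For an extension $\Gamma=N\rtimes\mathbb{Z}$ with $N\in\mathfrak{C}_{0}$, the same observation gives that $\alpha|_{N}$ is strongly outer and hence has property W, and $N$ is amenable as a subgroup of an amenable group, so Lemma \ref{lem:extension-z} applies and $\alpha$ has property W. This proves $\mathfrak{C}_{0}=\mathfrak{C}$.

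For the embedding I would show that $B:=\mathcal{W}\rtimes_{\alpha}\Gamma$ is separable, nuclear (since $\Gamma$ is amenable and $\mathcal{W}$ is nuclear), and carries the faithful canonical tracial state $\tilde{\tau}=\tau_{\mathcal{W}}\circ E$, where $E\colon B\to\mathcal{W}$ is the canonical conditional expectation. I would then establish that $\tilde{\tau}$ is quasidiagonal: the algebra $B$ satisfies the UCT --- which again follows by a short induction along the construction of $\mathfrak{C}$, using that $\mathcal{W}$ satisfies the UCT, that crossing by $\mathbb{Z}$ preserves the bootstrap class via the Pimsner--Voiculescu sequence, and that this class is closed under the inductive limits corresponding to increasing unions --- so the Tikuisis--White--Winter theorem guarantees that the faithful trace $\tilde{\tau}$ is quasidiagonal. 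Finally, the existence of a trace-preserving, hence injective, homomorphism from a separable nuclear C$^{*}$-algebra with faithful quasidiagonal trace into $\mathcal{W}$ produces the required embedding $B\hookrightarrow\mathcal{W}$, and Theorem \ref{thm:8.1} finishes the proof.

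I expect the main obstacle to be this embedding step rather than property W: once Lemmas \ref{lem:unions} and \ref{lem:extension-z} are in hand the induction for property W is essentially bookkeeping, whereas controlling the crossed product $B$ requires identifying its trace and verifying quasidiagonality. The delicate points are checking that $B$ lies in the UCT class for every $\Gamma\in\mathfrak{C}$ --- for which the inductive argument above, rather than a black-box appeal to Baum--Connes, seems cleanest --- and invoking the correct trace-preserving embedding theorem into $\mathcal{W}$; one must be careful that it is quasidiagonality of $\tilde{\tau}$, and not merely its existence, that licenses the passage into $\mathcal{W}$.
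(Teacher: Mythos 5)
Your treatment of property W coincides with the paper's: the same structural induction over $\mathfrak{C}$, with the trivial group as base case and Lemmas \ref{lem:unions} and \ref{lem:extension-z} handling unions and extensions by $\mathbb{Z}$ (using that restrictions of strongly outer actions to subgroups are strongly outer). That half is fine.

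The embedding step, however, contains a genuine gap. The theorem you invoke --- that a separable nuclear C$^*$-algebra in the UCT class with a faithful quasidiagonal trace admits a (trace-preserving) embedding into $\mathcal{W}$ --- is false as stated. The algebra $\mathcal{W}$ is projectionless, so any C$^*$-algebra that embeds into it must be projectionless as well; $M_2(\mathbb{C})$ or $M_{2^{\infty}}$ with its canonical trace satisfies every hypothesis you list but admits no embedding into $\mathcal{W}$, trace-preserving or otherwise. Quasidiagonality of $\tilde{\tau}$ (which does follow from Tikuisis--White--Winter once the UCT is known) only buys you a trace-preserving embedding into an object like $Q_{\omega}$; to land in $\mathcal{W}$ itself you must kill the $K$-theoretic obstruction, and the UCT alone does not do that. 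Nothing in your argument shows that $\mathcal{W}\rtimes_{\alpha}\Gamma$ is projectionless, let alone that its $K$-groups vanish.

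The paper closes exactly this gap by upgrading the UCT induction to a $KK$-contractibility induction: the class of separable C$^*$-algebras $KK$-equivalent to $\{0\}$ is closed under countable inductive limits and under crossed products by $\mathbb{Z}$ (Pimsner--Voiculescu), so $\mathcal{W}\rtimes_{\alpha}\Gamma$ is $KK$-contractible for every $\Gamma\in\mathfrak{C}$. Combined with simplicity and unique trace (from strong outerness), separability, nuclearity, and $M_{2^{\infty}}$-stability of the crossed product --- the last being a consequence of property W(i), so the two halves of the proof are not independent as in your write-up --- the characterization theorem \cite[Theorem 6.1]{Na4} yields $\mathcal{W}\rtimes_{\alpha}\Gamma\cong\mathcal{W}$, which gives condition (iii) of Theorem \ref{thm:8.1} outright. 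Your inductive UCT argument is the right kind of move, but it must be run for the stronger invariant ($KK$-equivalence to $\{0\}$) and paired with a characterization or classification theorem for $\mathcal{W}$ rather than with a quasidiagonality-based embedding principle.
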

\begin{proof}
Every action of the trivial group on $\mathcal{W}$ has property W by results in 
\cite{Na2} (or \cite[Theorem 3.8]{Na4}). 
By Lemma \ref{lem:unions} and Lemma \ref{lem:extension-z}, 
we see that $\alpha$ has property W. 
Note that this implies that $\mathcal{W}\rtimes_{\alpha}\Gamma$ is $M_{2^{\infty}}$-stable 
because $\alpha$ is cocycle conjugate to $\alpha\otimes\mathrm{id}_{M_{2^{\infty}}}$ on 
$\mathcal{W}\otimes M_{2^{\infty}}$. 
Since the class of separable nuclear C$^*$-algebras that are $KK$-equivalent to $\{0\}$ is closed 
under countable inductive limits and crossed products by $\mathbb{Z}$, 
\cite[Theorem 6.1]{Na4} implies that $\mathcal{W}\rtimes_{\alpha}\Gamma$ is isomorphic to 
$\mathcal{W}$. 
Therefore we obtain the conclusion by Theorem \ref{thm:8.1}.
\end{proof}

The following corollary is an immediate consequence of the theorem above. 

\begin{cor}
Let $\Gamma$ be a countable discrete group in $\mathfrak{C}$. Then there exists a unique strongly 
outer action of $\Gamma$ on $\mathcal{W}$ up to cocycle conjugacy. 
\end{cor}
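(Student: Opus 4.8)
The plan is to separate the two assertions of the corollary, since uniqueness is a purely formal consequence of Theorem \ref{thm:main} while existence requires one small verification. For uniqueness, suppose $\alpha$ and $\beta$ are strongly outer actions of $\Gamma$ on $\mathcal{W}$. Theorem \ref{thm:main} shows that each is cocycle conjugate to the single reference action $\mu^{\Gamma}\otimes\mathrm{id}_{\mathcal{W}}$ on $M_{2^{\infty}}\otimes\mathcal{W}$; as cocycle conjugacy is an equivalence relation, $\alpha$ and $\beta$ are cocycle conjugate to each other. Thus everything reduces to producing at least one strongly outer action of $\Gamma$ on $\mathcal{W}$, and the natural candidate is the reference action itself.

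First I would identify $M_{2^{\infty}}\otimes\mathcal{W}$ with $\mathcal{W}$. Writing $M_{2^{\infty}}\otimes\mathcal{W}$ as the countable inductive limit of the matrix amplifications $M_{2^{n}}(\mathcal{W})$, each of which is $KK$-equivalent to $\{0\}$ since $\mathcal{W}$ is, and using that this class is closed under countable inductive limits, we see that $M_{2^{\infty}}\otimes\mathcal{W}$ is a simple separable nuclear monotracial $\mathcal{Z}$-stable C$^*$-algebra that is $KK$-equivalent to $\{0\}$. By the classification results of \cite{CE} and \cite{EGLN} (see also \cite{Na4}) this yields an isomorphism $M_{2^{\infty}}\otimes\mathcal{W}\cong\mathcal{W}$, through which $\mu^{\Gamma}\otimes\mathrm{id}_{\mathcal{W}}$ becomes an action of $\Gamma$ on $\mathcal{W}$. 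Since strong outerness is a conjugacy invariant, it then suffices to check that $\mu^{\Gamma}\otimes\mathrm{id}_{\mathcal{W}}$ is strongly outer on $M_{2^{\infty}}\otimes\mathcal{W}$.

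For this last step I would pass to the GNS representation $\pi_{\tau}$ of the unique trace $\tau$ on $M_{2^{\infty}}\otimes\mathcal{W}$ and use the factorization $\pi_{\tau}(M_{2^{\infty}}\otimes\mathcal{W})''\cong\mathcal{R}_0\bar{\otimes}\mathcal{R}_0$, under which the induced action becomes $\widetilde{\mu^{\Gamma}}\otimes\mathrm{id}$. For $g\in\Gamma\setminus\{\iota\}$ the Bernoulli shift induces an outer (indeed aperiodic) automorphism $\widetilde{\mu^{\Gamma}}_g$ of $\mathcal{R}_0$, exactly as in the proof of Lemma \ref{lem:outer} via \cite[Theorem 3.2]{C3}. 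The key remaining observation is that tensoring an outer automorphism of a factor with the identity of a second factor remains outer: if $\widetilde{\mu^{\Gamma}}_g\otimes\mathrm{id}=\mathrm{Ad}(w)$ for a unitary $w\in\mathcal{R}_0\bar{\otimes}\mathcal{R}_0$, then $w$ commutes with $\mathbb{C}\bar{\otimes}\mathcal{R}_0$ and hence lies in the relative commutant $\mathcal{R}_0\bar{\otimes}\mathbb{C}$, forcing $\widetilde{\mu^{\Gamma}}_g$ itself to be inner, a contradiction. Hence $\widetilde{\mu^{\Gamma}}\otimes\mathrm{id}$ is outer, the reference action is strongly outer, and existence follows.

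I expect this strong-outerness check to be the only non-formal point, and it is routine given the outerness of Bernoulli shifts already used in the proof of Lemma \ref{lem:outer}. Combining it with the uniqueness argument above completes the proof.
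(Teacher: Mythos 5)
Your proposal is correct and follows the same route as the paper, which simply records this corollary as an immediate consequence of Theorem \ref{thm:main}: uniqueness is transitivity of cocycle conjugacy through the reference action $\mu^{\Gamma}\otimes\mathrm{id}_{\mathcal{W}}$. Your additional verification that this reference action exists on $\mathcal{W}$ and is strongly outer (via $M_{2^{\infty}}\otimes\mathcal{W}\cong\mathcal{W}$ and the outerness of $\widetilde{\mu^{\Gamma}}_g\otimes\mathrm{id}$ on $\mathcal{R}_0\bar{\otimes}\mathcal{R}_0$) is accurate and makes explicit a point the paper leaves implicit.
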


\end{document}